\documentclass[11pt]{amsart}
\pdfoutput=1
\usepackage{latexsym, graphicx, epsfig, amsmath, amsfonts,amssymb,subfigure,amsthm}
\usepackage{amsmath}
\usepackage{amsfonts}
\usepackage{amssymb}
\usepackage{algorithm}
\usepackage{algorithmic}
\usepackage{bm}
\usepackage{xcolor}
\usepackage{float}
\usepackage{graphicx}
\usepackage{epstopdf,mathtools,bbm}
\usepackage[left=1.25in,right=1.25in,top=1in]{geometry}

\def\R{\mathbb{R}}
\def\E{\mathbb{E}}
\newcounter{Rownumber} 
\DeclareMathOperator*{\argmin}{arg\,min}
\DeclareMathOperator*{\argmax}{arg\,max}

\newcommand{\bs}[1]{\boldsymbol{#1}}
\newcommand{\N}{\mathbbm{N}}
\newcommand{\calA}{\mathcal{A}}

\newtheorem{lemma}{Lemma}[section]
\newtheorem{theorem}{Theorem}[section]
\newtheorem{definition}{Definition}[section]
\newtheorem{corollary}{Corollary}[section]

\title{Weighted approximate Fekete points: sampling for least-squares polynomial approximation}

\author{Ling Guo}
\thanks{Department of Mathematics, Shanghai Normal University, Shanghai, China. Email: lguo@shnu.edu.cn. L. Guo is partially supported by NSFC (11101287, 11671265).}

\author{Akil Narayan}
\thanks{Department of Mathematics and Scientific Computing and Imaging Institute,
	University of Utah, Salt Lake City, UT 84112. Email: akil@sci.utah.edu. A. Narayan is partially supported by AFOSR FA9550-15-1-0467, and DARPA EQUiPS N660011524053.}

\author{Liang Yan}
\thanks{Department of Mathematics, Southeast University, Nanjing, China. Email: yanliang@seu.edu.cn.}

\author{Tao Zhou}
\thanks{LSEC, Institute of Computational Mathematics, Academy of Mathematics and Systems
Science, Chinese Academy of Sciences, Beijing 100190, China. Email: tzhou@lsec.cc.ac.cn. T. Zhou is partially supported the National Natural Science Foundation of China (Award Nos. 91630312, 91630203, and 11571351) and NCMIS.}

%
\begin{document}

\maketitle

\begin{abstract}
  We propose and analyze a weighted greedy scheme for computing deterministic sample configurations in multidimensional space for performing least-squares polynomial approximations on $L^2$ spaces weighted by a probability density function. Our procedure is a particular weighted version of the approximate Fekete points method, with the weight function chosen as the (inverse) Christoffel function. Our procedure has theoretical advantages: when linear systems with optimal condition number exist, the procedure finds them. In the one-dimensional setting with any density function, our greedy procedure almost always generates optimally-conditioned linear systems. Our method also has practical advantages: our procedure is impartial to compactness of the domain of approximation, and uses only pivoted linear algebraic routines. We show through numerous examples that our sampling design outperforms competing randomized and deterministic designs when the domain is both low and high dimensional.
%
%
\end{abstract}


\pagestyle{myheadings}
\thispagestyle{plain}

\section{Introduction}\label{sec:methodology}

The construction of polynomial surrogates that emulate a system response with respect to input parameters is a widely-used tool in computational science. A concrete example is provided by problems in parametric uncertainty quantification (UQ), where this approach is frequently called generalized Polynomial chaos (gPC) \cite{Ghanem_book_1991,Xiu_2002wiener}. The standard approach is to consider a scalar function $f(y)$ depending on inputs/parameters $y \in \Gamma \subset \mathbb{R}^d$, and to approximate this function with a multivariate polynomial expansion. The parameter $y$ is usually interpreted as a random parameter $Y$, and the basis chosen to perform the expansion is one whose elements $\psi_j$ are orthonormal under the density of $Y$,
\begin{align}\label{eq:f-approx}
  f(y) \approx \sum_{j=1}^N \widehat{f}_j \psi_j(y)
\end{align}
Accurately estimating the coefficients $\widehat{f}_j$ of the expansion is important since these coefficients can be easily manipulated to infer revealing properties, such as statistical moments or parametric sensitivities.  Many numerical techniques on how to obtain the polynomial coefficients in UQ problems have been developed in recent years. While early development often focused on ``intrusive" methods, such as stochastic Galerkin, much recent effort has concentrated on non-intrusive-type collocation methods \cite{Xiu_2005highorder,Eldred_2009advances,narayan_stochastic_2015}. In the collocation framework, one seeks to compute the expansion coefficients via point-evaluations of $f$, and thus constructing a ``good" configuration of samples in $\Gamma$ has become an active area of research. Popular methods include sparse grids \cite{Zabaras_2007sparsegrid,Fabio,Xiu_2005highorder,narayan_adaptive_2014} and polynomial interpolation \cite{Narayan_X_SISC_2012,Cohen_interpolation}. The particular numerical method one uses to compute expansion coefficients often influences the particular sampling strategy, as evidenced by reserach on sparse approximations using $\ell_1$-minimization \cite{Doostan_2011nonada,Xu_2014Weilpoint,Yan_GX_IJUQ_2012,JNZ_2016}.

In this paper we focus on computing coefficients via the discrete least-squares approach using point-evaluations of $f$. While it is relatively easy to compute the coefficients via standard linear algebraic operations, least-squares approaches have known stability issues. For instance, when using Newton-Cotes quadrature abscissae (equidistant point sets) it is highly unstable even for an infinitely smooth noiseless function unless significant oversampling is performed. Several sampling strategies has been proposed in recent years \cite{Cohen_2013staac,Migliorati_2014analysisL2,Zhou_2015wdls,Hampton_2015coherence,Tang_2014discretels,Narayan_2014Christoffel} to improve the stability for least-squares. These methods use both randomized and deterministic sampling methods. 

In this paper we propose and analyze a greedily-computed deterministic sample set for discrete least-squares, where the objective in the greedy process is a weighted determinant. A discrete-least squares system for computing coefficients in \eqref{eq:f-approx} from $M$ point evaluations of $f$ utilizes a Vandermonde-like matrix $\bs{V} \in \R^{M \times N}$, and our procedure greedily forms a set of points $A$ via:
\begin{align*}
  A &\gets A \cup y_{n+1}, & y_{n+1} = \argmax_{y \in \Gamma} \det| \bs{W} \bs{V} \bs{V}^T \bs{W} |,
\end{align*}
where $\bs{W}$ is a diagonal matrix containing the weights whose entries are the $L^2$ Christoffel function associated with it. The precise procedure is given in \eqref{eq:Q-greedy-F}. Without the weights, this procedure is essentially the method of computing Approximate Fekete points \cite{sommariva_computing_2009,Bos_2011Geometric}, and so computationally this is easily implemented with pivoted linear algebraic routines. Base on these connections, we call our procedure Christoffel-weighted approximate Fekete points (CFP).

The introduction of the weights introduces mathematically nontrivial results, and our procedure results in the following theoretical and practical advantages:
\begin{itemize}
  \item Approximate Fekete points are restricted to compact sets $\Gamma$. The CFP formulation is impartial to (non-)compactness of $\Gamma$.
  \item Under certain assumptions, the greedy CFP procedure generates a sequence that coincides with the result of simultaneous/global optimization. (That is, maximizing the weighted determinant by varying $y_1, y_2, \ldots$ simultaneously.) The required assumptions are in practice difficult to verify in multidimensional settings, but that our greedy procedure can in principle produce the same result as simultaneous optimization is a strong advantage of the method.
  \item For general distributions of $Y$ in one dimension, the requisite assumptions of the previous bullet point are essentially always satisfied. Thus, in one dimension our greedy design almost  always\footnote{The initial sample for the greedy method can be any point on $\R$ not coinciding with $N-1$ isolated points; see Thereom \ref{thm:1d-optimal} for details.} produces an optimal mesh. In this particular one-dimensional setting, the CFP algorithm produces abscissae for Gauss quadrature rules.
  \item Like the methodology for approximate Fekete points, the computational procedure for CFP requires essentially only pivoted linear algebra routines, in particular the $QR$ factorization.
  \item Our numerical results show that CFP produces empirically superior results when compared with the deterministic sampling strategy given by approximate Fekete points, and when compared against randomized Monte Carlo sampling methods. This is true for all our test cases, both in low-dimensional settings ($d=2$) and in relatively high-dimensional settings ($d=25$).
\end{itemize}

The paper is organized as following. In section 2, we introduce notation and discuss least-squares problems. Our approach for grid design is introduced in Section 3, along with a description of theoretical properties (with proofs provided in the Appendix). Section 4 describes some details of the algorithm, and section 5 numerically investigates several examples.

\subsection{Historical discussion}
This section describes some previous theoretical results on sampling for polynomial least-squares. The authors in \cite{Cohen_2013staac} provide foundational theoretical analysis for unweighted discrete least-squares using Monte Carlo sampling.  For an $N$-term expansion in tensor-product Legendre polynomials, iid sampling from the uniform distribution requires $M\sim N^2$ points to guarantee the stability with high probability. With expansions in tensor-product Chebyshev polynomials, the condition can be reduced to $M\sim N^{\frac{\ln 3}{\ln 2}}$ \cite{Chkifa_2015discretels}. More general results can be derived using the inequalities in \cite{Migliorati_2013Aq,migliorati_multivariate_2015}.

Weighted least-squares approaches with Monte Carlo samples have also been investigated. Analysis for general weighted procedures is given in \cite{Narayan_2014Christoffel}, where the authors also observe that sampling from the weighted pluripotential equilibrium measure provides optimal stability and convergence estimates for approximations with asymptotically large polynomial degree. The authors in \cite{Hampton_2015coherence} propose an inexact sampling method for optimal sampling in the non-asymptotic case. The results in \cite{cohen_optimal_2016} suggest an exact sampling method and show optimal convergence estimates in the non-asymptotic case. The work in \cite{narayan_computation_2017} provides efficient computational methods for exact sampling in the non-asymptotic case.


The focus of this paper is on determinstic sampling schemes; such methods have also been investigated \cite{Migliorati_2014analysisL2,Xu_2014Weilpoint,calvi_uniform_2008}. We remark again that polynomials grids constructed via Fekete or Leja methods are closely connected to our procedure \cite{bos_calculation_2008,Bos_2010MLFeketeLeja,Bos_2011Geometric}. Another closely related approach is provided in \cite{Shin_Xiu}, wherein the authors optimize a determinant-like objective.

\section{Problem formulation}
This section focuses on introduction of notation and some background material. More details on discrete least-squares problems can be found in \cite[Chaps. 10-11]{Smith_2013UQ}.



Let $y = (y_1 , ... , y_d)^\top$ be a $d$-dimensional vector whose components take values in $\Gamma_i \subset \R$. In parametric uncertainty quantification problems, each $y_i$ corresponds to a random variable input into a system, and the goal is understand how the system depends on these inputs. This is frequently done via a linear expansion in a basis of polynomials that is orthogonal with respect to the $L^2$ norm weighted by the probability density function, i.e., a polynomial Chaos expansion \cite{Ghanem_book_1991,Xiu_2002wiener}.

\subsection{Notation}\label{sec:notation}
We assume that $\Gamma$ is tensorial and $\rho(y)$ is a tensor-product probability density on $\Gamma \subset \R^d$. In parametric UQ problems, this is equivalent to assuming that the components of a random variable $Y = (Y_1, \ldots, Y_d)$ are mutually independent, and that $Y_i$ has marginal probability density $\rho_i: \Gamma_i \rightarrow [0, \infty)$. Thus $\Gamma:= \prod_{i=1}^d\Gamma_i \subset \mathbb{R}^d ,$ and $\rho(y)= \prod_{i=1}^d \rho_i(y_i): \Gamma\rightarrow \mathbb{R}^+$. All our results hold for variables $Y_i$ that are discrete, or mixtures of discrete and continuous random variables but for simplicity we assume throughout that they are continuous random variables with densities.

For each $i = 1, \ldots, d$, we can define orthogonality in terms of the $\rho_i$-weighted $L^2$ norm on $\Gamma_i$. Assuming that $0 < \E\,|Y_i|^{k} < \infty$ for all $k \in \N_0$, then $Y_i$ has finite moments of all orders. This ensure existence of a family of orthogonal polynomials. We let $\varphi^i_n$ denote the degree-$n$ polynomial from the orthogonal family associated to the weight function $\rho_i$. Therefore,
\begin{align*}
  \int_{\Gamma_i} \varphi^i_k(y_i) \varphi^i_l(y_i) \rho_i(y_i) dy_i &= \delta_{k,l}, & i=1,\ldots,d,\; &\; k, l =0, 1, \ldots,
\end{align*}
where $\delta_{k,l}$ is the Kronecker delta function.

Since $\rho$ is a tensor-product weight on a tensorial domain $\Gamma$, then multivariate polynomials orthogonal under $\rho$ can be formed via tensorization: For any multi-index $\alpha \in \N_0^d$, the polynomials defined as
\begin{align*}
  \psi_{\alpha}(y) &\coloneqq \prod_{j=1}^d \varphi^{j}_{\alpha_j}\left(y_{j}\right), & \alpha \in \N_0^d
\end{align*}
satisfy
\begin{align*}
  \left\langle \psi_\alpha, \psi_\beta \right\rangle \coloneqq \int_{\Gamma} \psi_\alpha(y) \psi_\beta(y) \rho(y) dy &= \delta_{\alpha,\beta}, & \alpha, \beta &\in \N_0^d
\end{align*}
With $\alpha = \left( \alpha_1, ..., \alpha_d\right) \in \mathbb{N}_0^d$ a multi-index, then $|\alpha| = \sum_{j=1}^d \alpha_j$. Various polynomial subspaces can be defined by identifying an appropriate collection of multi-indices. For example, the total degree and hyperbolic cross index sets of order $k$ are, respectively,
\begin{align*}
  \Lambda_k^{\textrm{TD}} &\coloneqq \left\{ \alpha \;\; \Big| \;\; |\alpha| \leq k \right\}, &
  \Lambda_k^{\textrm{HC}} &\coloneqq \left\{ \alpha \;\; \Big| \;\; \prod_{j=1}^d (\alpha_j+1) \leq k+1 \right\}
\end{align*}
It will occasionally be convenient to place an ordering on indices in a finite index set. If $\Lambda \subset \N_0^d$ has size $N$, then we will assume an implicit one-to-one correspondence between the sets
\begin{align}\label{eq:index-ordering}
  \left\{ \alpha \;\; | \;\; \alpha \in \Lambda \right\} \longleftrightarrow \left\{ 1, \ldots, N \right\}.
\end{align}
We let $\alpha(j)$ denote the index corresponding to $j$ from the above map, so that $\alpha(1), \ldots, \alpha(N)$ is an ordering of the elements of $\Lambda$. In our context, the particular choice of correspondence defined above is irrelevant.

For any index set $\Lambda$, we define the associated space of polynomials as
\begin{align}\label{eq:P-def}
  P = P(\Lambda) = \mathrm{span} \left\{ \psi_\alpha \;\; | \;\; \alpha \in \Lambda \right\}
\end{align}
Via the map \eqref{eq:index-ordering}, the functions $\psi_j \coloneqq \psi_{\alpha(j)}$, $j=1, \ldots, N$ form an orthonormal basis for $P(\Lambda)$.
We now define a set of \textit{weighted} polynomials. With
\begin{align*}
  K_\Lambda(y) = \sum_{\alpha \in \Lambda} \psi_\alpha^2(y),
\end{align*}
then $Q(\Lambda)$ is the following space of weighted polynomials:
\begin{align}\label{eq:Q-def}
  Q = Q(\Lambda) = \mathrm{span} \left\{ \frac{\psi_\alpha}{\sqrt{K_\Lambda}} \;\; \big| \;\; \alpha \in \Lambda \right\}.
\end{align}
Given any points $y_1, y_2, \ldots \in \Gamma$, we will use the notation
\begin{align*}
  A_m &= \left\{ y_1, \ldots, y_m \right\}, & m &\geq1
\end{align*}
to denote a size-$m$ set of points. An $m \times N$ Vandermonde-like matrix for $A_m$ on $P(\Lambda)$ using an orthonormal basis is given by
\begin{align}\label{eq:V-def}
  \bs{V}\left(A_m, P\right) &\in \R^{m \times N}, & (V)_{j,k} &\coloneqq \psi_{\alpha(k)}(y_j),
\end{align}
for $1 \leq j \leq m$, and $1 \leq k \leq N$, with $\alpha(k)$ as defined in \eqref{eq:index-ordering}. One advantage of our using an orthonormal basis is that the Vandermonde-like matrix using \textit{any} other orthonormal basis for $P(\Lambda)$ equals $\bs{V}\left(A_m, P(\Lambda)\right) \bs{U}$ for some orthogonal matrix $\bs{U}$.

We define $\bs{V}\left(A_m, Q\right)$ similarly, having elements $\frac{\psi_{\alpha(k)(y_j)}}{\sqrt{K_\Lambda(y_j)}}$.

\subsection{Discrete least-squares problems}
Given a function $f: \Gamma \rightarrow \R$ and multi-index set $\Lambda$, our main goal is to construct a polynomial approximation $f_N$ from $P(\Lambda)$:
\begin{align}\label{eq:f-exp}
  f(y) \approx f_N(y) \coloneqq \sum_{n = 1}^N \widehat{f}_n \psi_n(y)
\end{align}
The $L^2_{\rho}(\Gamma)$-best approximation from $P(\Lambda)$ is the polynomial
\begin{align*}
  f_N^\ast(y) = \sum_{n=1}^N \left\langle f, \psi_n \right\rangle \psi_n(y).
\end{align*}
The coefficients of this polynomial clearly require significant information about the function $f$ via the inner products $\left\langle f, \psi_n \right\rangle$. In practice such information cannot be computed directly and instead only point evaluations $f(y)$ at a discrete number $y$ values are possible to obtain. If $A_M = \left\{ y_1, \ldots y_M\right\} \subset \Gamma$ is some selection of $M$ points, then one possible method to compute $f_N$ is to compute the least-squares residual minimizer, which is a quadratic optimization problem whose solution is linear in the data $f(y_m)$:
\begin{align}\label{LS}
  \left\{\widehat{f}_n\right\}_{n=1}^N \;\; = \;\; \argmin_{f_n} \sum_{m=1}^M \left[ f(y_m) - \sum_{n=1}^N \widehat{f}_n \psi_n(y_m) \right]^2
\end{align}
This problem can be written algebraically: Let $\mathbf{V}(A_M,P)$ be the $M\times N$ Vandermonde-like matrix defined in \eqref{eq:V-def}. We collect the unknown coefficients $\widehat{f}_n$ into the vector $\widehat{\mathbf{f}}$, and  collect the function evaluations $f(y_m)$ into
the vector $\mathbf{f}\in \mathbb{R}^M$. The least-squares approach (\ref{LS}) is equivalent to
\begin{equation}\label{Lsequation}
  \mathbf{\widehat{f}}=\arg\min_{\mathbf{v}\in \mathbb{R}^N}\|\mathbf{V}(A_M,P)\mathbf{v}-\mathbf{f}\|_2,
\end{equation}
which is simple algebraic least-squares problem.

Recent research has shown that the unweighted least-squares formulation above is frequently inferior to a particular weighted approach \cite{Hampton_2015coherence,Narayan_2014Christoffel,cohen_optimal_2016}. This approach uses weights given by $1/K_\Lambda$. The algebraic formulation of the weighted approach solves
\begin{equation}\label{eq:weighted-Lsequation}
  \mathbf{\widehat{f}}=\arg\min_{\mathbf{v}\in \mathbb{R}^N}\|\mathbf{V}(A_M,Q)\mathbf{v}-\bs{W}\mathbf{f}\|_2,
\end{equation}
where $\bs{W}$ is a diagonal matrix with entries $(W)_{m,m} = 1/\sqrt{K_\Lambda(y_m)}$, $1 \leq m \leq M$. This paper focuses on solving \eqref{eq:weighted-Lsequation}, where we use a deterministic sampling approach to compute $A_M$.

\section{A Quasi-optimal sampling strategy}

Our sampling strategy relies on the notion of Fekete points for polynomial interpolation, to this end, we first review some basic definitions for the Fekete points. Throughout this section $\Lambda$ is an arbitrary but fixed finite multi-index set with size $N$, and we use the abbreviations $P = P(\Lambda)$ and $Q = Q(\Lambda)$ as defined in \eqref{eq:P-def} and \eqref{eq:Q-def}, respectively.

The CFP method we propose in this paper is provided by the greedy optimization \eqref{eq:Q-greedy-F}, but the first three subsections below provide motivating discussion for this optimization.

\subsection{Determinants and interpolation}
One set of good points, in theory, for polynomial interpolation is the Fekete points. 
We give a brief discussion of this below, but for more details along this line we refer to \cite{Bos_2008calculation, Bos_2010MLFeketeLeja} and references therein.

Assume in this section that $\Gamma \subset \mathbb{R}^d$ is a compact set with nonempty interior. 
Given a set of $N$ distinct points $\mathcal{A}_N=\{y_i\}_{i=1}^{N}\subset \Gamma$ and a function $f: \Gamma\to \mathbb{R}$, the polynomial interpolation problem is to find a $p\in P_\Lambda(\Gamma)$ such that
\begin{equation}\label{interpolationcondition}
p(y_i)=f(y_i),\quad \forall y_i\in \mathcal{A}_N.
\end{equation}
We assume that this problem is unisolvent; this is true unless the $y_i$ have a pathological configuation in $\Gamma$\footnote{For example, if we choose $y_i$ as realizations of a continuous random variable that is uniform on $\Gamma$, then the interpolation problem is unisolvent with probability 1.}. With $\{\psi_1,\psi_2,...\psi_{N}\}$ any ordered basis for $P(\Lambda)$, then there are unique coefficients $c_j$ satisfying a linear system that determines $p$:
\begin{align}\label{inperpolationmatrix}
  p &= \sum\limits_{j=1}^{N}c_j\psi_j & \mathbf{V} (A_N; P_\Lambda)\mathbf{c}&=\mathbf{f},
\end{align}
\begin{definition}\label{def:det}
  Let $y_j$, $j=1, \ldots, N$ denote any set of points in $\Gamma$, so that $A_m = \left\{ y_j \right\}_{j=1}^m$ for $m \leq N$ is well-defined. Define the determinant modulus of the rectangular matrix $\bs{V} = \bs{V}(A_m, P)$ as
  \begin{align*}
    \left| \det\bs{V}\right| &= \sqrt{ \left| \det{\left(\bs{V} \bs{V}^T\right)} \right|}, & 1 \leq m &\leq N
  \end{align*}
\end{definition}
The definition above coincides with the standard square matrix determinant (modulus) when $m = N$. Some additional notation is the operation of appending a point to a given set $A_m$, and replacing the $j$th element of $A_N$, respectively:
\begin{align*}
  A_{m \cup y} &= \left\{ y_1, \ldots, y_m, y \right\}, & y &\in \Gamma. \\
  A_{N\backslash j(y)} &\coloneqq \left\{ y_1, \ldots, y_{j-1}, y, y_{j+1}, \ldots, y_N \right\}, & 1 \leq j \leq N, \hskip 5pt y &\in \Gamma.
\end{align*}
If the polynomial interpolation problem on $P_\Lambda$ is on $\mathcal{A}_N$, then the cardinal Lagrange interpolation polynomials are given by
\begin{align*}
  \ell_j(y) &= \frac{ \det \bs{V}\left( A_{N\backslash j(y)}, P_\Lambda \right)}{ \det \bs{V}\left(A_N, P_\Lambda \right)}, & \ell_j(y_k) = \delta_{j,k},
\end{align*}
with $\delta_{j,k}$ the Kronecker delta. This allows us to explicitly construct the unique element $p \in P(\Lambda)$ that interpolaes at $A_N$ an arbitrary $f$ continuous on $A_N$:
\begin{align*}
  I_N f \coloneqq \sum_{n=1}^N f(y_n) \ell_n(y).
\end{align*}
The output of this operator is an element of $P(\Lambda)$, which we can view as a subspace of $C(\Gamma)$, continuous functions over $\Gamma$. This results in a popular notion of stability, the Lebesgue constant,
\begin{align*}
  \left\| I_N \right\|_{C(\Gamma) \rightarrow C(\Gamma)} &= \sup_{y \in \Gamma} \sum_{n=1}^N \left|\ell_n(y)\right|. \\
\end{align*}
This quantity does not depend on which basis for $P(\Lambda)$ is chosen to compute the cardinal Lagrange interpolants. Finally, the conditioning of the problem of computing $\bs{c}$ from an arbitrary $\bs{f}$ is measured by
\begin{align*}
  \kappa\left( \bs{V}\left(A_N, P \right)\right) = \frac{\sigma_{1}\left(\bs{V}\right)}{\sigma_{N}\left(\bs{V}\right)},
\end{align*}
where $\sigma_j$, $j=1, \ldots, N$, are the singular values of $\bs{V}$ in decreasing order.

\subsection{Near-optimal stability and conditioning}
Consider the square systems case, $M = N$, with interpolation on $P = P(\Lambda)$. A set of points that maximizes the determinant of the Vandermonde-like matrix is called a set of Fekete points:
\begin{align}\label{eq:fekete-points}
  A_N^F(P(\Lambda)) \coloneqq \argmax_{A_N = \left\{y_1, \ldots, y_N\right\} \in \Gamma^N} \left| \det \bs{V}\left(A_N, P \right) \right|.
\end{align}
Classically, Fekete points on general manifolds are point configurations that minimize a Reisz energy. On a compact interval in $\R$, a specialization of Riesz energy coincides with the determinant of the Vandermonde matrix. Thus, in the one-dimensional setting a set of Fekete points is determined by maximizing the determinant of the Vandermonde matrix.

The utility of Fekete points for polynomial approximation is that they provide at-most-linear growth of the Lebesgue constant:
\begin{align*}
  \left\| I_N \right\|_{C(\Gamma) \rightarrow C(\Gamma)} = \sup_{y \in \Gamma} \sum_{n=1}^N \left| \frac{ \det \bs{V}\left( A^F_{N\backslash n(y)}, P\right)}{ \det \bs{V}\left(A^F_N, P\right)} \right| \leq \sup_{y \in \Gamma} \sum_{n=1}^N 1 = N
\end{align*}
In practice, logarithmic growth is observed. Thus, the computation of determinant-maximizing sample points \eqref{eq:fekete-points} is of great interest.

We can simiarly define an optimization problem that seeks a point configuration with minimal condition number:
\begin{align}\label{eq:optimal-conditioning}
  A_N^C(P) \coloneqq \argmin_{A_N = \left\{y_1, \ldots, y_N\right\} \in \Gamma^N} \kappa \left(\bs{V}\left(A_N, P \right)\right).
\end{align}
Note that $A_N^F$ and $A_N^C$ are different sets in general.

\subsection{Greedy designs}
The optimization problems \eqref{eq:fekete-points} and \eqref{eq:optimal-conditioning} are not computationally feasible in general, and so one frequently results to greedy algorithms. Greedy versions of these algorithms are straightforward to devise:
\begin{subequations}\label{eq:greedy-P}
\begin{align}
  A_N^{F\ast}(P) &= \left\{ y_1^{F\ast}, \ldots, y_N^{F\ast} \right\}, & y_{n+1}^{F\ast} &= \argmax_{y \in \Gamma} \left| \det\bs{V}\left(A_{n\cup y}^{F\ast}, P\right) \right| \\
  A_N^{C\ast}(P) &= \left\{ y_1^{C\ast}, \ldots, y_N^{C\ast} \right\}, & y_{n+1}^{C\ast} &= \argmin_{y \in \Gamma}  \kappa\left(\bs{V}\left(A_{n\cup y}^{C\ast}, P\right) \right)
\end{align}
\end{subequations}
These greedy versions are still difficult, but are more feasible since they involve only repeated optimization over $\Gamma$ (instead of optimization over $\Gamma^N$). The determinant-maximizing objective is easier to compute compared to the condition number objective. In pratice, one often replaces exact maximization over $\Gamma$ with maximization over a discrete set. Finally, there is ambiguity at each iteration if multiple locations $y$ maximize objectives, and there is freedom in choosing the starting point $y_1$ in each case.

There are two major difficulties with all of our previous discussions: First, we now have four potential sets, $A^F$, $A^C$, $A^{F\ast}$, and $A^{C\ast}$ that we would like to compute. It seems unclear, for example, whether $A^{F\ast}$ or $A^{C\ast}$ is the better option. Our second difficulty, is that none of these sets is well-defined if $\Gamma$ is not compact.

\subsection{Weighted greedy designs}
We can partially resolve the difficulties identified at the end of the previous section by considering weighted polynomials. By doing this, we show under some assumptions that greedy designs can produce the same result as the much more burdensome simultaenous optimization designs. In addition, we show in one dimension for any $\rho$ that this almost always happens.

We reformulate all four problems, both the optimal versions \eqref{eq:fekete-points} and \eqref{eq:optimal-conditioning}, as well as their greedy versions \eqref{eq:greedy-P}. Let $\Gamma \subset \R^d$, $d \geq 1$, and now assume only that $\Gamma$ has an interior containing any open set with positive Lebesgue measure. Instead of working on the polynomial space $P = P(\Lambda)$, we'll use the weighed space $Q = Q(\Lambda)$ defined in \eqref{eq:Q-def}. The point configurations that maximize the determinant, and minimize the condition number, respectively, are defined as
\begin{subequations}\label{eq:Q-optimal}
  \begin{align}\label{eq:Q-optimal-F}
    A_N^F(Q) \coloneqq \argmax_{A_N = \left\{y_1, \ldots, y_N\right\} \in \Gamma^N} \left| \det \bs{V}\left(A_N, Q \right) \right|, \\\label{eq:Q-optimal-C}
A_N^C(Q) \coloneqq \argmin_{A_N = \left\{y_1, \ldots, y_N\right\} \in \Gamma^N} \kappa\left( \bs{V}\left(A_N, Q \right) \right),
\end{align}
\end{subequations}
and the greedy versions are, for $n \geq 1$,
\begin{subequations}\label{eq:Q-greedy}
  \begin{align}\label{eq:Q-greedy-F}
    A_N^{F\ast}(Q) &= \left\{ y_1^{F\ast}, \ldots, y_N^{F\ast} \right\}, & y_{n+1}^{F\ast} &= \argmax_{y \in \Gamma} \left| \det\bs{V}\left(A_{n\cup y}^{F\ast}, Q\right) \right| \\\label{eq:Q-greedy-C}
  A_N^{C\ast}(Q) &= \left\{ y_1^{C\ast}, \ldots, y_N^{C\ast} \right\}, & y_{n+1}^{C\ast} &= \argmin_{y \in \Gamma}  \kappa\left(\bs{V}\left(A_{n\cup y}^{C\ast}, Q\right) \right)
\end{align}
\end{subequations}
In the above, the starting values $y_1^{F\ast}$ and $y_1^{C\ast}$ can take arbitrary values in $\R$, but thus choice affects the final result of the greedy pursuit. Furthermore, at each $n$ there may be multiple points $y^{F\ast}_{n+1}$ that extremize the objective. We assume that any one of these extremizers are chosen for the procedure, and refer below to this potential non-uniqueness of the sequence as a \textit{branch} of the iterative optimization.

The greedy algorithms working on $Q$ are just as computationally feasible as those working on $P$. However, the advantage of this particular weighted approach is that if an optimal solution exists then all of the four approaches \eqref{eq:Q-optimal} and \eqref{eq:Q-greedy} give the optimal solution.
\begin{theorem}\label{thm:nd-optimal}
  Let $\rho: \Gamma \rightarrow [0, \infty)$ be a probability density on $\R^d$. Let $\Lambda$ be an arbitrary multi-index set of size $N$ defining $Q$. A point configuration $A_N$ satisfies
  \begin{subequations}\label{eq:nd-optimal}
  \begin{align}\label{eq:nd-optimal-det}
    \left| \det \bs{V}\left( A_N, Q \right) \right| = 1,
  \end{align}
  if and only if
  \begin{align}\label{eq:nd-optimal-cond}
    \kappa\left( \bs{V}\left( A_N, Q \right) \right) = 1.
  \end{align}
  \end{subequations}
  Thus, solutions to \eqref{eq:Q-optimal} attaining optimal objective values coincide. If $A_N$ is a set that satisfies either (hence both) of the optimal objective values above, then:
  \begin{itemize}
    \item If $y_1^{F\ast} \in A_N$ then the iteration \eqref{eq:Q-greedy-F} has a branch such that $A_N^{F\ast} = A_N$.
    \item If $y_1^{C\ast} \in A_N$ then the iteration \eqref{eq:Q-greedy-C} has a branch such that $A_N^{C\ast} = A_N$.
  \end{itemize}
\end{theorem}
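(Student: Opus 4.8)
The plan is to exploit a single structural feature of the weighting: because the weight is the inverse Christoffel function, \emph{every row} of $\bs{V}(A_m,Q)$ is a unit vector. Indeed, the row indexed by $y_j$ has squared Euclidean norm $\frac{1}{K_\Lambda(y_j)}\sum_{k=1}^N \psi_{\alpha(k)}^2(y_j)=1$ by the definition of $K_\Lambda$. Hence for the square case $M\coloneqq \bs{V}(A_N,Q)$ one always has $\operatorname{tr}(MM^T)=\sum_{j=1}^N \sigma_j^2 = N$, where $\sigma_1\ge\cdots\ge\sigma_N\ge 0$ are the singular values of $M$. This trace identity, available for free for every choice of $A_N$, is what drives the entire argument.

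To obtain the equivalence \eqref{eq:nd-optimal} I would apply the AM--GM inequality to the singular values under this constraint. Writing $|\det M|=\prod_j\sigma_j=(\prod_j\sigma_j^2)^{1/2}\le\big(\tfrac1N\sum_j\sigma_j^2\big)^{N/2}=1$, we see $|\det M|\le 1$ for every $A_N$, with equality in AM--GM exactly when all $\sigma_j$ are equal; combined with $\sum_j\sigma_j^2=N$ this forces $\sigma_j\equiv 1$, i.e.\ $\kappa(M)=1$. Conversely $\kappa(M)=1$ makes all $\sigma_j$ equal, and the trace identity again gives $\sigma_j\equiv 1$, hence $|\det M|=1$. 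This proves \eqref{eq:nd-optimal-det}$\Leftrightarrow$\eqref{eq:nd-optimal-cond}. Since $|\det M|\le 1$ and $\kappa(M)\ge 1$ hold universally, the value $1$ is simultaneously the global maximum of the determinant objective in \eqref{eq:Q-optimal-F} and the global minimum of the conditioning objective in \eqref{eq:Q-optimal-C}; whenever this value is attained, the maximizers and minimizers are precisely the sets $A_N$ with $MM^T=I_N$, so the optimal solution sets of \eqref{eq:Q-optimal} coincide.

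For the greedy claims I would induct, maintaining the invariant that after $n$ steps the selected points lie in $A_N$ and the associated rows of $\bs{V}$ are orthonormal --- which holds because they form a subset of the orthonormal rows of the optimal matrix $M$. The base case is the hypothesis $y_1^{F\ast}\in A_N$ (a single unit row). For the inductive step, let $w(y)$ denote the unit row associated with a candidate $y$. Maximizing the determinant modulus of Definition \ref{def:det} is equivalent to maximizing the Gram determinant $\det\!\big(\bs{V}(A_{n\cup y},Q)\,\bs{V}(A_{n\cup y},Q)^T\big)$, and the incremental Gram-determinant identity gives this equal to $1-\sum_{i=1}^n\langle w(y),w_i\rangle^2\le 1$, since $w_1,\dots,w_n$ are orthonormal and $\|w(y)\|=1$. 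Thus $1$ is a global upper bound over all $y\in\Gamma$, attained exactly when $w(y)\perp\operatorname{span}\{w_1,\dots,w_n\}$; the not-yet-chosen points of $A_N$ achieve this, so there is a branch of \eqref{eq:Q-greedy-F} selecting one of them, and iterating recovers $A_N$. The conditioning iteration \eqref{eq:Q-greedy-C} is handled identically: each intermediate $(n+1)\times N$ matrix has unit rows, so $\sum_{i=1}^{n+1}\sigma_i^2=n+1$ and $\kappa\ge 1$ with equality iff the rows are orthonormal, which is again realized by the unchosen points of $A_N$.

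I expect the main obstacle to be bookkeeping in the greedy step rather than any deep difficulty: one must verify that the per-step objective is optimized \emph{globally} over $y\in\Gamma$, and not merely among the points of $A_N$, so that remaining inside $A_N$ is a legitimate branch of the $\argmax$ (resp.\ $\argmin$); and for the conditioning iteration one must check that the rectangular condition number is correctly pinned at its minimum by the trace constraint at every partial stage. The unit-row property makes both of these transparent, so the whole argument should reduce to careful but routine linear algebra.
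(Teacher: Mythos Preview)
Your proposal is correct and follows essentially the same approach as the paper: both exploit the unit-row property of $\bs{V}(A_m,Q)$ to characterize optimality via row orthonormality, then induct for the greedy claims. The only cosmetic differences are that the paper phrases the determinant bound as Hadamard's inequality rather than AM--GM on singular values, and computes the incremental Gram determinant via an explicit $QR$ factorization rather than the Schur-complement formula; the resulting expression $1-\sum_i\langle w(y),w_i\rangle^2$ and the surrounding logic are identical.
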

\begin{proof}
  See Appendix \ref{thm:nd-optimal-proof}.
\end{proof}

The strength of this result is twofold: first, it suggests that we may either optimize the determinant or the condition number and obtain equivalent answers. Second, it shows that greedy optimization recovers the global optimum. These conclusions give us great flexibility in computational procedures since we may propose a method for a greedy determinant maximization and this plausibly gives results comparable to global minimization of the condition number.

The unfortunate caveat in the result above is that we require existence of a point configuration with optimal condition number and determinant. It is initially unclear whether or not this is a reasonable assumption. However, nontrivial multidimensional examples for when this condition is satisfied exist \cite{jakeman2017}.

A somewhat surpising positive result is that in one dimension $(d=1)$, infinitely many point configuations with optimal condition number exist, and the union of all these optimal sets covers every real number, except for $N-1$ isolated points.
\begin{lemma}\label{lemma:1d}
  Let $\rho(y)$ be any probability density function on $\Gamma = \R$, and let $\Lambda = \left\{ 0, \ldots, N-1 \right\}$ for any $N \geq 1$. Recall that $\varphi_N(\cdot)$ denotes the degree-$N$ orthonormal polynomial with respect to the $\rho$-weighted $L^2$ inner product on $\Gamma$. We use $\varphi_{N-1}^{-1}(0)$ to denote the zero set of the polynomial $\varphi_{N-1}(y)$, which is always a set of $N-1$ distinct points in $\R$. Then:
  \begin{enumerate}
    \item For any $y \not \in \varphi_{N-1}^{-1}(0)$, there is a set $A_N = A_N(y)$ that satisfies \eqref{eq:nd-optimal}.
    \item The set $A_N(y)$ defined above is unique as a function of $y$.
    \item The set $A_N(y)$ is given by
      \begin{align*}
        A_N(y) = r_N^{-1}\left( r_N(y) \right),
      \end{align*}
      where $r_N$ is the meromorphic function
      \begin{align*}
        r_N(y) = \frac{\varphi_N(y)}{\varphi_{N-1}(y)},
      \end{align*}
      and $r_N^{-1}$ denotes its set-valued functional inverse.
    \item The set $A_N(y)$ contains the abscissae for an $N$-point positive quadrature rule exact for polynomials up to degree $2 N - 2$,
      \begin{align*}
        \int_\Gamma p(z) \rho(z) d z &= \sum_{z \in A_N(y)} \frac{1}{K_\Lambda(z)} p(z), & \deg p &\leq 2 N - 2.
      \end{align*}
    \item If $y \in \varphi_{N}^{-1}(0)$, then $A_N(y)$ are the abscissae of the $N$-point Gauss quadrature rule.
  \end{enumerate}
\end{lemma}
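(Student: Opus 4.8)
The plan is to characterize the optimality condition \eqref{eq:nd-optimal} in terms of the reproducing (Christoffel--Darboux) kernel $\mathcal{K}_N(x,z) = \sum_{n=0}^{N-1}\varphi_n(x)\varphi_n(z)$, and then convert this into a statement about level sets of $r_N$. First I would exploit the defining feature of the weighted space $Q$: the $j$th row of $\bs{V}(A_N,Q)$ is $(\varphi_0(y_j),\ldots,\varphi_{N-1}(y_j))/\sqrt{K_\Lambda(y_j)}$, which has unit Euclidean norm because $K_\Lambda(y_j)=\sum_{n=0}^{N-1}\varphi_n^2(y_j)=\mathcal{K}_N(y_j,y_j)$. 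Hence every diagonal entry of $\bs{V}\bs{V}^T$ equals $1$ automatically, while the off-diagonal entry $(i,j)$ equals $\mathcal{K}_N(y_i,y_j)/\sqrt{K_\Lambda(y_i)K_\Lambda(y_j)}$. By Hadamard's inequality a matrix with unit-norm rows has $|\det|\le 1$, with equality iff the rows are orthogonal; combined with Theorem \ref{thm:nd-optimal}, condition \eqref{eq:nd-optimal} is therefore equivalent to $\bs{V}(A_N,Q)$ being an orthogonal matrix, i.e. to the relations $\mathcal{K}_N(y_i,y_j)=0$ for all $i\ne j$.

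Next I would invoke the Christoffel--Darboux identity $\mathcal{K}_N(x,z)=b_{N-1}[\varphi_N(x)\varphi_{N-1}(z)-\varphi_{N-1}(x)\varphi_N(z)]/(x-z)$, where $b_{N-1}>0$ is the subdiagonal recurrence coefficient. For distinct $y_i,y_j$, the relation $\mathcal{K}_N(y_i,y_j)=0$ is equivalent to $\varphi_N(y_i)\varphi_{N-1}(y_j)=\varphi_{N-1}(y_i)\varphi_N(y_j)$, which, after ruling out that any $y_j$ is a common zero (impossible, since consecutive orthonormal polynomials share no zeros), is exactly $r_N(y_i)=r_N(y_j)$. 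Thus an optimal configuration is precisely a set of $N$ distinct points lying in a single level set $r_N^{-1}(c)$. Setting $c=r_N(y)$ for a base point $y\notin\varphi_{N-1}^{-1}(0)$ yields the formula $A_N(y)=r_N^{-1}(r_N(y))$ of item (3); uniqueness (item (2)) follows since any optimal set containing $y$ must place all its points in this same level set.

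The crux, and the step I expect to be the main obstacle, is showing that for each finite $c$ the level set $r_N^{-1}(c)$ consists of exactly $N$ distinct real points, which supplies existence (item (1)). The equation $r_N(z)=c$ reads $\varphi_N(z)-c\,\varphi_{N-1}(z)=0$, a polynomial of degree exactly $N$ (its leading term is inherited from $\varphi_N$). To show its roots are real and simple I would identify them with the eigenvalues of the rank-one perturbed Jacobi matrix $\tilde{J}_N=J_N+c\,b_{N-1}\,e_N e_N^T$: writing the three-term recurrence as $z\,\Phi(z)=J_N\Phi(z)+b_{N-1}\varphi_N(z)\,e_N$ with $\Phi(z)=(\varphi_0(z),\ldots,\varphi_{N-1}(z))^T$, one checks that $\tilde{J}_N\Phi(z)=z\,\Phi(z)$ precisely when $\varphi_N(z)=c\,\varphi_{N-1}(z)$. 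Since $\tilde{J}_N$ is real symmetric and tridiagonal with nonzero subdiagonal, it has $N$ real and simple eigenvalues, which are then the $N$ distinct real nodes of $A_N(y)$.

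Finally, items (4) and (5) follow readily. From the orthogonality of $\bs{V}(A_N,Q)$ established above I would read off the companion identity $\bs{V}^T\bs{V}=I$, i.e. $\sum_{z\in A_N(y)}\varphi_k(z)\varphi_l(z)/K_\Lambda(z)=\delta_{k,l}$ for $0\le k,l\le N-1$; since the products $\{\varphi_k\varphi_l\}$ span all polynomials of degree $\le 2N-2$ (each degree $d\le 2N-2$ is realized by some $\varphi_i\varphi_j$ with $i+j=d$, with nonzero leading coefficient), this extends by linearity to the quadrature rule of item (4), whose weights $1/K_\Lambda(z)>0$ are manifestly positive. Item (5) is the special case $c=0$: if $y\in\varphi_N^{-1}(0)$ then $r_N(y)=0$ (note $\varphi_{N-1}(y)\ne 0$), so $A_N(y)=\varphi_N^{-1}(0)$ is the zero set of $\varphi_N$, i.e. the classical Gauss nodes.
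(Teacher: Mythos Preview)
Your argument is correct and follows the same overall architecture as the paper's proof: use the Christoffel--Darboux identity to identify \eqref{eq:nd-optimal} with the condition that all nodes lie in a common level set of $r_N$, then read off the quadrature identity from $\bs{V}^T\bs{V}=I$. Two sub-steps, however, use different devices than the paper.

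For existence of $N$ distinct real points in $r_N^{-1}(c)$, the paper argues analytically: $r_N$ has $N-1$ simple real poles (the zeros of $\varphi_{N-1}$), its derivative is strictly positive between consecutive poles, and $r_N(z)\to\pm\infty$ as $z\to\pm\infty$; hence on each of the $N$ intervals cut out by the poles, $r_N$ is a strictly increasing bijection onto $\R$, giving exactly $N$ preimages of any real $c$. Your route via the rank-one perturbed Jacobi matrix $\tilde{J}_N$ is equally valid and arguably cleaner, since it reduces the question to the standard fact that an irreducible real symmetric tridiagonal matrix has simple spectrum; it also makes the connection to Gauss--Radau--type rules explicit.

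For the quadrature exactness (item 4), the paper performs Euclidean division $p=\varphi_{N-1}q+r$ with $\deg q\le N-1$ and $\deg r\le N-2$, expands $q$ and $r$ in the $\varphi_j$ basis, and observes that every resulting term is a product $\varphi_i\varphi_j$ with $0\le i,j\le N-1$. Your spanning argument (for each $d\le 2N-2$ there is a product $\varphi_i\varphi_j$ of exact degree $d$, hence the products span $P_{2N-2}$) reaches the same conclusion more directly. Both approaches are fine; yours is shorter, while the paper's makes the polynomial-division structure visible.
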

\begin{proof}
  See Appendix \ref{lemma:1d-proof}.
\end{proof}

Note that in the lemma above we must consider $\rho$ a density on $\Gamma = \R$, even if its support lies on a compact set. This result shows that in one dimension (for \textit{any} $\rho$) there are many optimal point configurations.

\begin{theorem}\label{thm:1d-optimal}
  Let $\rho$ be any probability density function on $\Gamma = \R$, and let $\Lambda = \left\{ 0, \ldots, N-1 \right\}$ for any $N \geq 1$. Let $y \in \varphi_{N-1}^{-1}(0)$ be arbitrary but fixed, and let $A_N(y)$ denote the unique set defined in Lemma \ref{lemma:1d}. Set $A_N^F(Q) = A_N^C(Q) = A_N(y)$, which satisfy the maximization problems \eqref{eq:Q-optimal}. Set the initial values for the greedy iterations \eqref{eq:Q-greedy} as
  \begin{align*}
    y_1^{F\ast} = y_1^{C\ast} = y.
  \end{align*}
  Then
  \begin{align*}
    A_N^{C}(Q) = A_N^{C\ast}(Q) = A_N^{F\ast}(Q) = A_N^{F}(Q),
  \end{align*}
  and
  \begin{align*}
    \left| \det \bs{V}\left( A_N^{F}, Q \right) \right| = \left| \det \bs{V}\left( A_N^{F\ast}, Q \right) \right| = 1 = \kappa \left( \bs{V}\left( A_N^{C\ast}, Q \right) \right) = \kappa\left( \bs{V}\left( A_N^{C}, Q\right) \right)
  \end{align*}
\end{theorem}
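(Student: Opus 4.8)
The plan is to derive the statement by combining the two results already available: Lemma \ref{lemma:1d}, which produces the explicit optimal configuration $A_N(y)$ in one dimension, and Theorem \ref{thm:nd-optimal}, which equates the four optimization problems \eqref{eq:Q-optimal}--\eqref{eq:Q-greedy} whenever a configuration attaining the optimal objective exists. First I would record the global bound that certifies $A_N(y)$ as a true optimum rather than merely a feasible point. Each row of $\bs{V}(A_N, Q)$ is the vector $\frac{1}{\sqrt{K_\Lambda(y_j)}}\bigl(\psi_{\alpha(1)}(y_j), \ldots, \psi_{\alpha(N)}(y_j)\bigr)$, whose squared Euclidean norm is $K_\Lambda(y_j)/K_\Lambda(y_j) = 1$. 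Thus $\bs{V}(A_N, Q)$ always has unit-norm rows, and Hadamard's inequality applied to $\det(\bs{V}\bs{V}^T)$ yields $\left| \det \bs{V}(A_N, Q) \right| \leq 1$ for every configuration, with equality precisely when the rows are orthonormal; since the condition number is always at least $1$, the values in \eqref{eq:nd-optimal} are the genuine extreme values of \eqref{eq:Q-optimal}.

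With this bound in place, Lemma \ref{lemma:1d} does the heavy lifting: the fixed set $A_N(y)$ satisfies \eqref{eq:nd-optimal}, hence attains both the maximal determinant modulus $1$ and the minimal condition number $1$, so it solves \eqref{eq:Q-optimal-F} and \eqref{eq:Q-optimal-C} simultaneously. This justifies the assignment $A_N^F(Q) = A_N^C(Q) = A_N(y)$ and already delivers the final string of objective-value identities in the theorem. The greedy counterparts $A_N^{F\ast}$ and $A_N^{C\ast}$ are then handled through Theorem \ref{thm:nd-optimal}. The starting node lies in the optimal set: by definition $A_N(y) = r_N^{-1}\bigl(r_N(y)\bigr)$, and $y$ is trivially one of the preimages of $r_N(y)$ under $r_N$, so $y \in A_N(y)$. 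Hence $y_1^{F\ast} = y_1^{C\ast} = y \in A_N(y)$, and Theorem \ref{thm:nd-optimal} furnishes a branch of each greedy pursuit terminating in $A_N(y)$.

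To promote ``a branch attains $A_N(y)$'' to the asserted equality $A_N^{F\ast}(Q) = A_N^{C\ast}(Q) = A_N(y)$, I would invoke the uniqueness in part (2) of Lemma \ref{lemma:1d}. Any configuration whose determinant modulus equals $1$ is optimal, hence of the form $A_N(y')$; if in addition it contains the first greedy node $y$, then $r_N(y) = r_N(y')$, which forces $A_N(y') = A_N(y)$. Because the initial node is always retained by the greedy construction, any branch that reaches the global optimum must terminate at exactly $A_N(y)$. It therefore suffices to argue that the realized greedy branch does attain determinant modulus $1$ at step $N$.

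The step I expect to be the main obstacle is precisely this last point. The greedy rule maximizes the determinant one node at a time, and at intermediate steps several candidates can tie (for instance at step two any node whose row is orthogonal to that of $y$), so the branch is not a priori unique, and one must rule out the greedy painting itself into a corner where no orthogonalizing node remains. I would control this with the nested determinant identity $\det(\bs{V}_m \bs{V}_m^T) = \det(\bs{V}_{m-1} \bs{V}_{m-1}^T)\,\|P^\perp_{m-1} r_m\|^2$, where $r_m$ is the appended unit row and $P^\perp_{m-1}$ projects off the span of the previous rows: each step multiplies the squared determinant by a factor in $[0,1]$, so maintaining the value $1$ is equivalent to appending a row orthogonal to all predecessors. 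The existence of such an orthogonalizing node at every step --- equivalently, that the one-dimensional curve $y \mapsto \frac{1}{\sqrt{K_\Lambda(y)}}\bigl(\psi_{\alpha(1)}(y),\ldots,\psi_{\alpha(N)}(y)\bigr)$ meets each relevant orthogonal complement --- is exactly the structural fact that Lemma \ref{lemma:1d} encodes through the meromorphic map $r_N$, and it is the place where the proof genuinely uses $d = 1$.
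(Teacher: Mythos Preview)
Your approach is the same as the paper's: combine Lemma~\ref{lemma:1d} (existence of an optimal $A_N(y)$) with Theorem~\ref{thm:nd-optimal} (greedy has a branch reaching the optimum), and then invoke the uniqueness of $A_N(y)$ to eliminate the branch ambiguity. The paper's proof is just three sentences long and says exactly this.

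Where you diverge from the paper is in the final paragraph, and there you make the obstacle harder than it is. You frame the difficulty as an \emph{existence} question---after a possibly ``wrong'' tie-breaking choice, does an orthogonalizing node still exist?---and worry about the greedy painting itself into a corner. But uniqueness (part~(2) of Lemma~\ref{lemma:1d}) rules out wrong choices altogether, so the corner never arises. The clean inductive step is: assume $y_1^{F\ast},\ldots,y_n^{F\ast}\in A_N(y)$. Then any remaining element of $A_N(y)$ gives $r=1$ in the notation of the proof of Theorem~\ref{thm:nd-optimal}, so the greedy maximum at stage $n+1$ is exactly $1$. Any point $z$ attaining this maximum has $\bs{\psi}(z)\perp\bs{\psi}(y_1^{F\ast})=\bs{\psi}(y)$; the Christoffel--Darboux identity \eqref{eq:cd} then forces $r_N(z)=r_N(y)$, i.e.\ $z\in A_N(y)$. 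Hence \emph{every} maximizer at each step lies in $A_N(y)$, and every branch terminates at $A_N(y)$ with determinant modulus~$1$. This is what the paper means by ``since the set $A_N$ containing $y$ is unique, this is true regardless of which solution branches are taken''---the uniqueness is applied step by step, not only at the end.

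A related minor point: your sentence ``any configuration whose determinant modulus equals $1$ is \ldots\ of the form $A_N(y')$'' is true, but it is not the literal content of Lemma~\ref{lemma:1d}; it is again a consequence of Christoffel--Darboux (pick any element $y'$ of the configuration and note that orthogonality of the remaining rows to $\bs{\psi}(y')$ forces them into $r_N^{-1}(r_N(y'))$). Once you see that this single orthogonality $\Rightarrow$ membership implication does all the work, the last paragraph of your proposal collapses to two lines.
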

\begin{proof}
  See appendix \ref{thm:1d-optimal-proof}.
\end{proof}

We emphasize that the above result holds for \textit{any} univariate density $\rho$, even those with non-compact support. This result completely characterizes the greedy scheme's behavior in one dimension, and shows that it achieves an optimal condition number for almost any starting point $y$.

One final observation is that the set $A_N^{F\ast}$ produced by the one-dimensional greedy iteration produce optimal quadrature rules.
\begin{corollary}
  Let $y \not\in \varphi_{N-1}^{-1}(0)$. Then the greedy iteration \eqref{eq:Q-greedy-F} with $y_1^{F\ast} = y$ produces the unique positive $L^2_\rho$ quadrature rule abscissae with optimal polynomial accuracy. In particular, if $y \in \varphi_N^{-1}(0)$ then they produce the the abscissae of the $\rho$-weighted Gauss quadrature rule.
\end{corollary}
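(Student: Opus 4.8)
The plan is to derive this corollary almost directly from Lemma \ref{lemma:1d} and Theorem \ref{thm:1d-optimal}, which between them already identify both the output of the greedy scheme and its quadrature meaning. First I would observe that the hypothesis $y \notin \varphi_{N-1}^{-1}(0)$ is exactly the regime in which Theorem \ref{thm:1d-optimal} applies: initializing \eqref{eq:Q-greedy-F} with $y_1^{F\ast} = y$, that theorem gives $A_N^{F\ast}(Q) = A_N(y)$, where $A_N(y)$ is the set of Lemma \ref{lemma:1d} satisfying $\left| \det \bs{V}(A_N, Q)\right| = 1$. Consequently the abscissae returned by the greedy determinant maximization are precisely $A_N(y)$, and no further analysis of the greedy dynamics (branches, non-uniqueness of the argmax) is required.

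Next I would read the quadrature content directly off Lemma \ref{lemma:1d}: part (4) states that $A_N(y)$ are the nodes of an $N$-point positive rule with Christoffel weights $1/K_\Lambda$ that is exact on all polynomials of degree at most $2N-2$, and part (2) states that this node set is unique as a function of $y$. Together these give the ``unique positive $L^2_\rho$ quadrature rule'' claim, since prescribing the nodes and requiring interpolatory exactness already fixes the weights. The Gauss specialization is then immediate: when $y \in \varphi_N^{-1}(0)$ we have $r_N(y) = 0$, and part (5) of Lemma \ref{lemma:1d} identifies $A_N(y)$ with the $N$-point Gauss abscissae.

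The only point that is not a verbatim citation is the word \emph{optimal} attached to the polynomial accuracy, and this is where I expect the (modest) work to lie. By part (3) of Lemma \ref{lemma:1d} the nodes are the $N$ roots of $\varphi_N - c\,\varphi_{N-1}$ with $c = r_N(y)$, a quasi-orthogonal polynomial of the first kind. I would invoke the classical fact that an $N$-point rule can attain exactness $2N-1$ only at the Gauss nodes (the zeros of $\varphi_N$); since $\varphi_{N-1}$ cannot vanish on all $N$ roots of $\varphi_N$, for $c \neq 0$ the node set differs from the Gauss set, so the degree $2N-2$ furnished by part (4) is sharp, while for $c = 0$ the rule upgrades to Gauss with exactness $2N-1$. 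As a concrete confirmation one tests the rule against $\varphi_{N-1}\varphi_N$ (degree $2N-1$): its true integral vanishes by orthogonality, whereas the quadrature sum equals $c \sum_{z \in A_N(y)} \varphi_{N-1}^2(z)/K_\Lambda(z)$, a strictly positive multiple of $c$, exhibiting a nonzero error off the Gauss fiber. I do not anticipate any genuine obstacle here; the substance of the corollary has already been absorbed into Lemma \ref{lemma:1d} and Theorem \ref{thm:1d-optimal}, and what remains is this standard Gauss-quadrature optimality statement.
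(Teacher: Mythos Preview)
Your proposal is correct and mirrors the paper's approach: the corollary is stated there without proof, as an immediate consequence of Lemma \ref{lemma:1d} and Theorem \ref{thm:1d-optimal}, and you invoke exactly those two results in the intended way. Your additional paragraph justifying the word \emph{optimal} (sharpness of degree $2N-2$ off the Gauss fiber via the test polynomial $\varphi_{N-1}\varphi_N$) goes beyond what the paper supplies---indeed, using exactness on $\varphi_{N-1}^2$ one sees that your quadrature sum equals exactly $c$, making the argument clean---so on this point your write-up is more complete than the paper itself.
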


The theoretical results of this section give strong motivation for using the greedy weighted determinant design $A_N^{F\ast}(Q)$: in one dimension we produce optimally-conditioned point sets for almost any starting location in the greedy design. The one-dimensional greedy designs coincide with the more onerous simultaneous optimization designs, and even coincide with designs based on condition number optimization. For multiple dimensions we retain all the previous properties but must make the assumption that a point set with unit condition number exists, and that our starting location lies in this set. Under this existence assumption, greedy designs again produce optimal sets.

The remainder of this paper investigates the computational performance of the set $A_N^{F\ast}(Q)$.

\section{CFP algorithmic details}

The CFP strategy given by \eqref{eq:Q-greedy-F} can be used to construct a sample set $A$ having the same size as the dimension of the (weighted) polynomial space $Q$. However, in least-squares problems we wish the sample count $M = |A|$ to dominate the polynomial space dimension $N = \dim Q$. To achieve this, we start with a specified space $Q$ and enrich it with weighted polynomials of a higher degree. This procedure is largely \textit{ad hoc}, so we cannot claim optimality for our specific strategy, but our numerical results indicate that our procedure works very well.

Let $\Lambda$ be a given multi-index set of size $N$. All our examples will use a downward-closed set $\Lambda$, but this is mainly for algorithmic convenience. The weighted polynomial space $Q$ is defined in \eqref{eq:Q-def}. We wish to compute a set of samples $A = \{y_1, \ldots, y_M\}$ that we use to solve the weighted least-squares problem \eqref{eq:weighted-Lsequation}. Our procedure to accomplish this enriches $\Lambda$ to a size of $M > N$, and we subsequently compute $M$ CFP points associated to this enriched $\Lambda$. We then compute the least-squares solution \eqref{eq:weighted-Lsequation} using the original index set $\Lambda$. We describe the details of this below.

\subsection{Choosing $Q$}\label{sec:enrichment}
Given an enrichment size $\Delta N$, we define $\widetilde{\Lambda}$ of size $N + \Delta N$ and satisfying $\widetilde{\Lambda} \supset \Lambda$ by adding to $\Lambda$ elements based on total-degree graded reverse lexicographic ordering of the set $\Lambda_n \backslash \Lambda$. Precisely:
\begin{enumerate}
  \item Compute $n = \mathrm{max} \left\{ |\alpha| \;\; \big| \;\; \alpha \in \Lambda \right\}$.
  \item Compute $S \coloneqq \Lambda^{\mathrm{TD}}_n \backslash \Lambda$. Set $n \gets n+1$ if $\Lambda = \Lambda^{\mathrm{TD}}_n$.
  \item Impose a total order on $S$: graded (partially-ordered) based on total degree, and ordering within a grade defined by reverse lexicographic ordering.
  \item Extract the first $\Delta N$ elements from $S$ and append those to $\Lambda$, creating the set $\widetilde{\Lambda}$.
\end{enumerate}
The input to the procedure above is simply $\Lambda$ (assumed downward-closed) and an enrichment size $\Delta N$. In all our numerical tests, we choose $\Delta N = \left\lfloor 0.05 N \right\rfloor$, representing a small enrichment of $\Lambda$. We now define the $(N + \Delta N)$-dimensional space
\begin{align*}
  \widetilde{Q} \coloneqq Q_{\widetilde{\Lambda}}.
\end{align*}
We will compute CFP using this enriched weighted space.

\subsection{Choosing candidate sets}
CFP sets are computed via \eqref{eq:Q-greedy-F}. This procedure must be discretized in practice since it is computationally difficult to optimize over the continuum $\Gamma$. We choose a large but finite-size candidate set $\widetilde{A}$, and replace the maximization over $\Gamma$ by maximization over $\widetilde{A}$.

When $d$ is ``moderately small", say $d \leq 5$, modern computational power allows us to choose a candidate mesh that ``fills" this $d$-dimensional space. Thus, we can be reasonably sure that an intelligent choice for $\widetilde{A}$ is an effective surrogate for $\Gamma$.  However when $d \gtrsim 5$ we can no longer be reasonably confident that $\widetilde{A}$ is a fine enough mesh on $\Gamma$. In this paper we do not make any advancements with respect to this deficiency. In our numerical simulations we choose $\widetilde{A}$ as the union of two random ensembles:
\begin{align*}
  \widetilde{A} = \left\{ R_1, \ldots, R_{\widetilde{M}/2}, S_1, \ldots, S_{\widetilde{M}/2} \right\},
\end{align*}
$\widetilde{M}$ is a reasonably large number, usually $\widetilde{M} = 10^4$.  The random variables $R_j$, $j = 1, \ldots, \widetilde{M}/2$ are independent and identically distributed (iid) samples from the probability density $\rho$ on $\Gamma$, and $S_j$, $j=1, \ldots, \widetilde{M}/2$ are iid samples from a degree-asymptotic density inspired by the approach in \cite{Narayan_2014Christoffel}.

When $\rho$ is the uniform measure on $\Gamma = [-1,1]^d$ we choose $S_j$ to be sampled from the tensor-product Chebyshev density, and when $\rho$ is the Gaussian meaure on $\R^d$ we choose the sampling distribution of $S_j$ to have support on the $\R^d$ unit ball with radius $\sqrt{2 n}$, with $n$ as chosen in Section \ref{sec:enrichment}. Its density as a function of $s \in \R^d$ is
\begin{align*}
  C_d \left(1 - \frac{1}{2n} \sum_{k=1}^d s_k^2\right)^{d/2}, \hskip 20pt s = (s_1, \ldots, s_d) \in \R^d,
\end{align*}
where $C_d$ is a normalization constant. We refer to \cite{Narayan_2014Christoffel} for details on this latter sampling density. Our choices above may naturally be replaced with any other candidate mesh, e.g., quasi Monte Carlo sets, sparse grids, or tensor-product points.

We finally note that weakly admissible meshes \cite{Calvi_2008uniformapp} are known to be good candidate sets for \textit{non}-weighted polynomial spaces. These meshes are good candiates for our weighted formulation as well, but we forgo their use since known constructions of these meshes exhibit very large growth for even moderate dimensions \cite{Bos_2011Geometric}.

\subsection{Greedy optimization}\label{sec:greedy}
We have chosen a space $\widetilde{Q}$ of dimension $M = N + \Delta N$ and a candidate mesh $\widetilde{A}$ of size $\widetilde{M} \gg N$. Our goal is now to compute a CFP set:
\begin{align*}
  \textrm{Solve \eqref{eq:Q-greedy-F}, setting $Q \gets \widetilde{Q}$ and $\Gamma \gets \widetilde{A}$}
\end{align*}
This optimization procedure at each iteration is equivalent to forming the matrix $\bs{V}\left(\widetilde{\calA}, \widetilde{Q}\right)$ and then to choose rows that greedily maximize the spanned volume of the chosen rows. This, in turn, is easily performed by a column-pivoted QR decomposition of $\bs{V}^T$, see, e.g., \cite{sommariva_computing_2009,bos_computing_2010}. The ordered pivots define the choice of $M$ points $A_M = A_M^{F\ast}$. 

\subsection{Least-squares solve}
Having selected a size-$M$ point set $A_M$ in Section \ref{sec:greedy}, and with a size-$N$ polynoimal space $P(\Lambda)$ already defined (along with its weighted version $Q$), we now compute $\bs{\widehat{f}}$ as the solution to \eqref{eq:weighted-Lsequation}.

The coefficients in the solution vector $\bs{\widehat{f}}$ define our desired expansion shown in \eqref{eq:f-exp}.

\section{Numerical tests} \label{sec:tests}
In this section we investigate the stability and convergence properties of the CFP sampling strategy. We will generate sampling sets $A_M$ using CFP and compare it against two popular alternatives for generating $A_M$: randomized Monte Carlo using independent and identically-distributed samples from $\rho$, and deterministic Approximate Fekete points. We are interested primarily in investigating how sampling rates of $M$ versus the approximation space dimension $N$ affects stable and accurate approximate. In our figures and results, we will use ``MC" to denote  Monte Carlo procedure, ``Fekete" to denote approximate Fekete points, and ``C-Fekete" to denote the CFP algorithm of this paper.

In order to implement our proposed method, we first picked $\widetilde{M}$ random points as a candidate set, then we select $M$ optimal points from Section \ref{sec:methodology} as CFP points. In all the following examples, we choose $\widetilde{M}=10^4$.

\subsection{Matrix stability}

In this section we investigate the condition number of the matrix $\bs{V}(A_M,P)$ (MC and Fekete), and the condition number for the matrix $\bs{V}(A_M,Q)$ (C-Fekete).  In all examples that follow we perform 50 trials of each procedure and report the mean condition number along with $20\%$ and $80\%$ quantiles.

\subsubsection{Bounded domains}
We first consider the Legendre polynomials for which the domain is $\Gamma=[-1,1]^d$ and $\rho$ is the uniform probability density.  In Fig. \ref{fig:d2_Leg_cond}, the $d=2$ condition numbers obtained by the procedures are shown for a linear over-sampling of $M=1.05N$. The numerical results for $d=6$ and $d=10$ are shown in Fig. \ref{fig:d6_Leg_cond} and \ref{fig:d10_Leg_cond}, respectively. We notice that the CFP procedure produces point configurations that have notably more stable linear systems when compared against AFP or MC designs.

\begin{figure}[htbp]
\begin{center}
    \includegraphics[width=6cm]{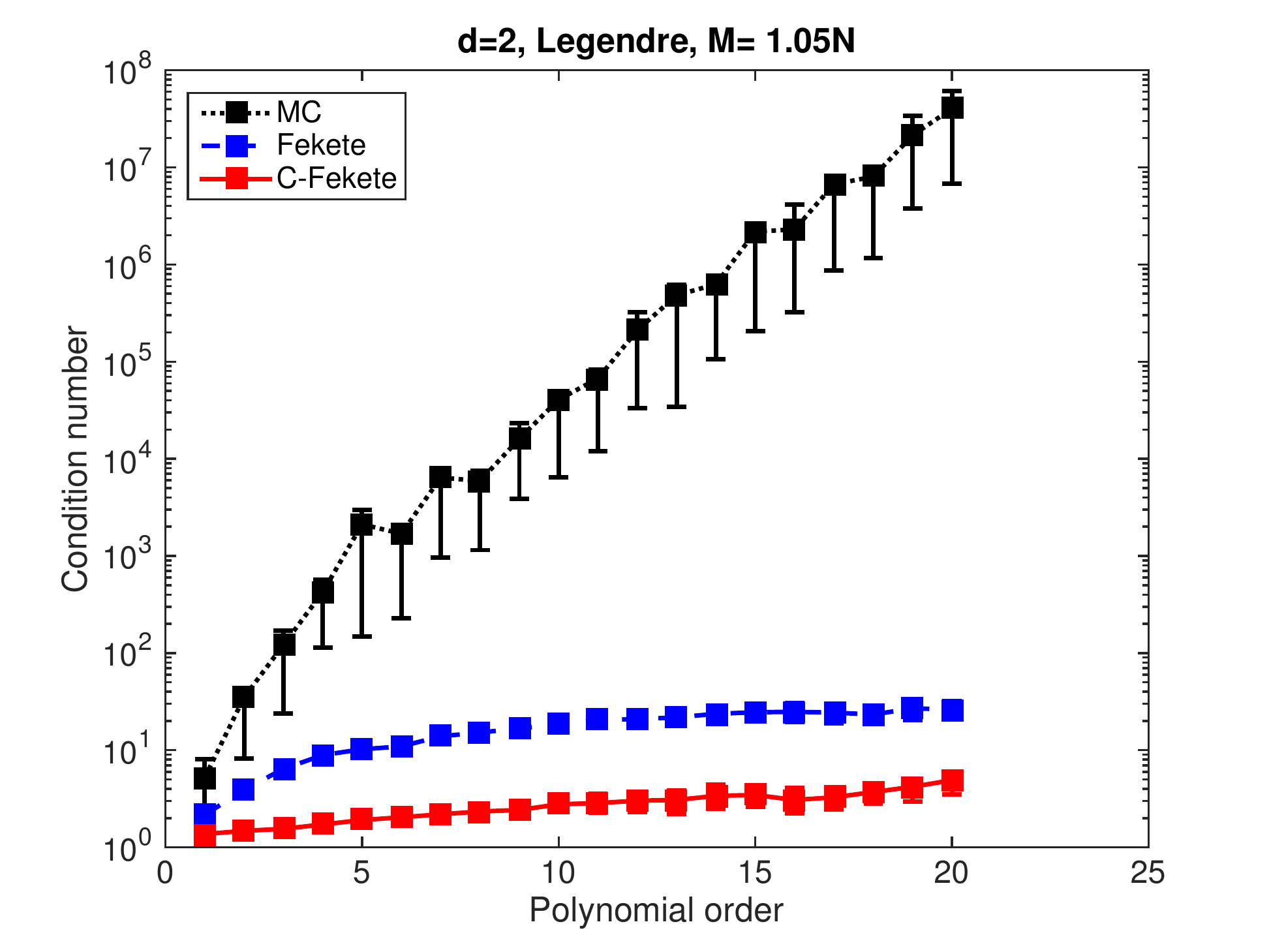}
    \includegraphics[width=6cm]{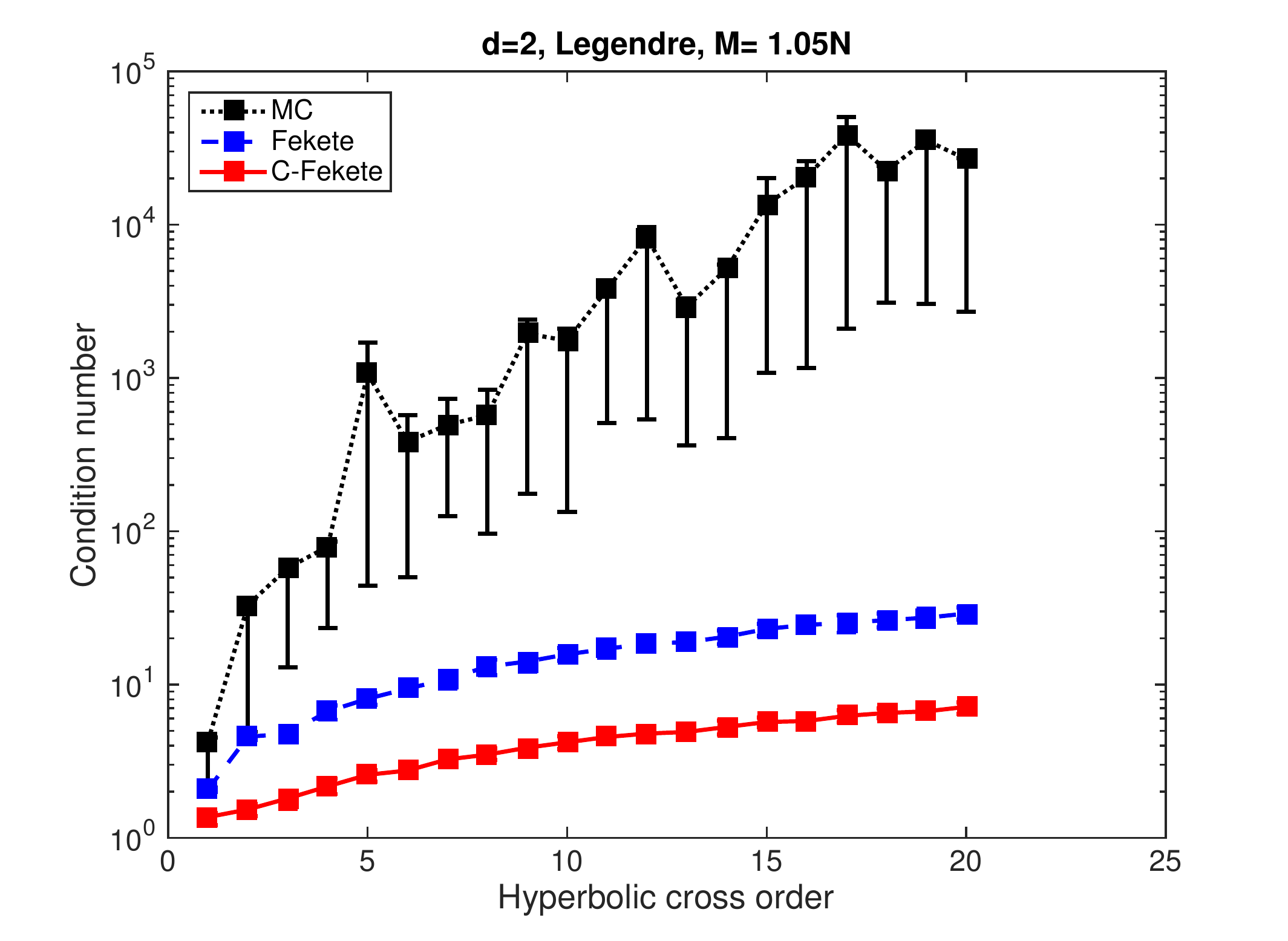}
\end{center}
  \caption{Condition number with respect to the polynomial degree in the 2-dimensional polynomial spaces. Left: Total degree (TD); Right: Hyperbolic cross (HC).
  \label{fig:d2_Leg_cond}
    }
\end{figure}

\begin{figure}[htbp]
\begin{center}
    \includegraphics[width=6cm]{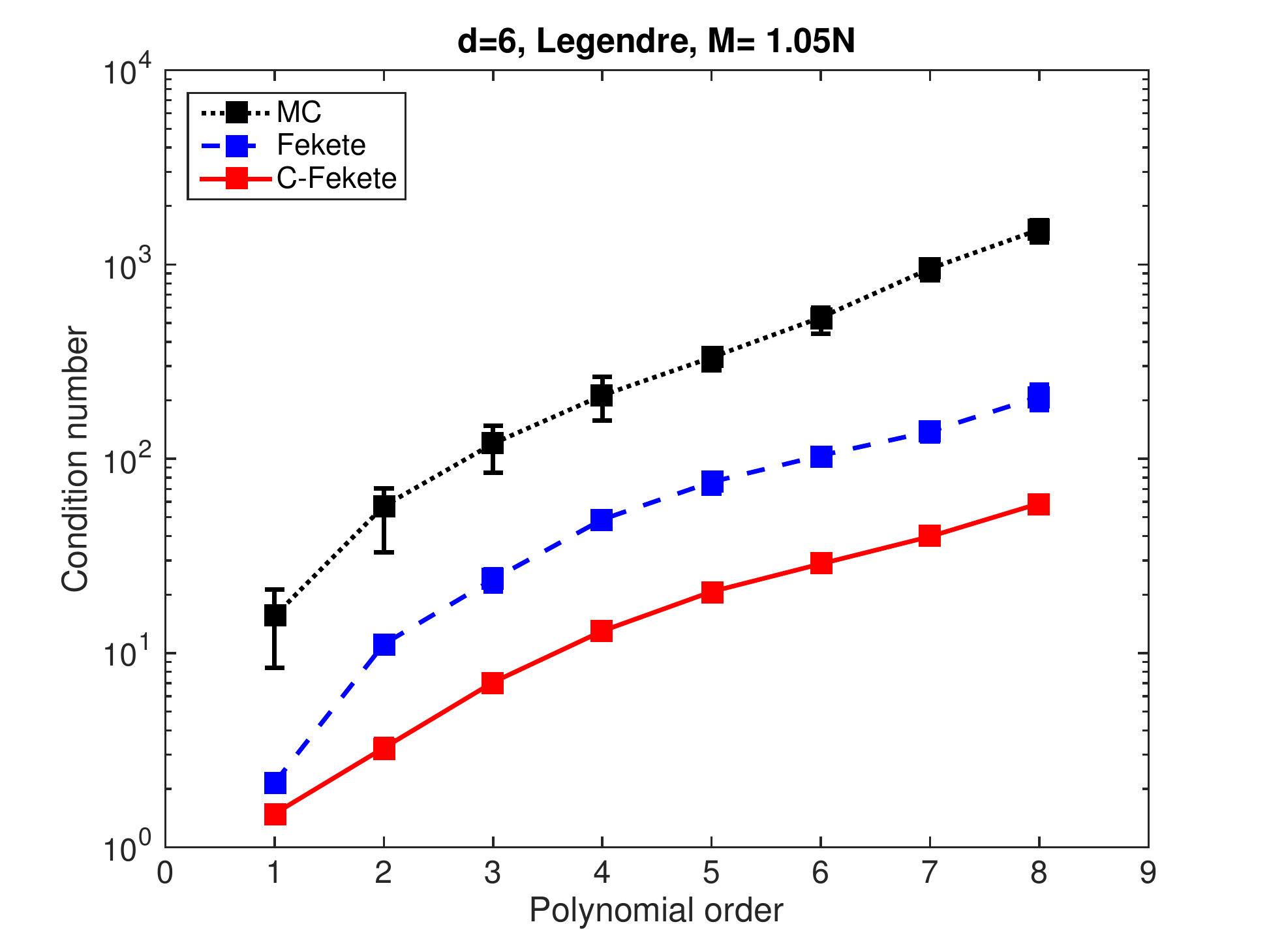}
    \includegraphics[width=6cm]{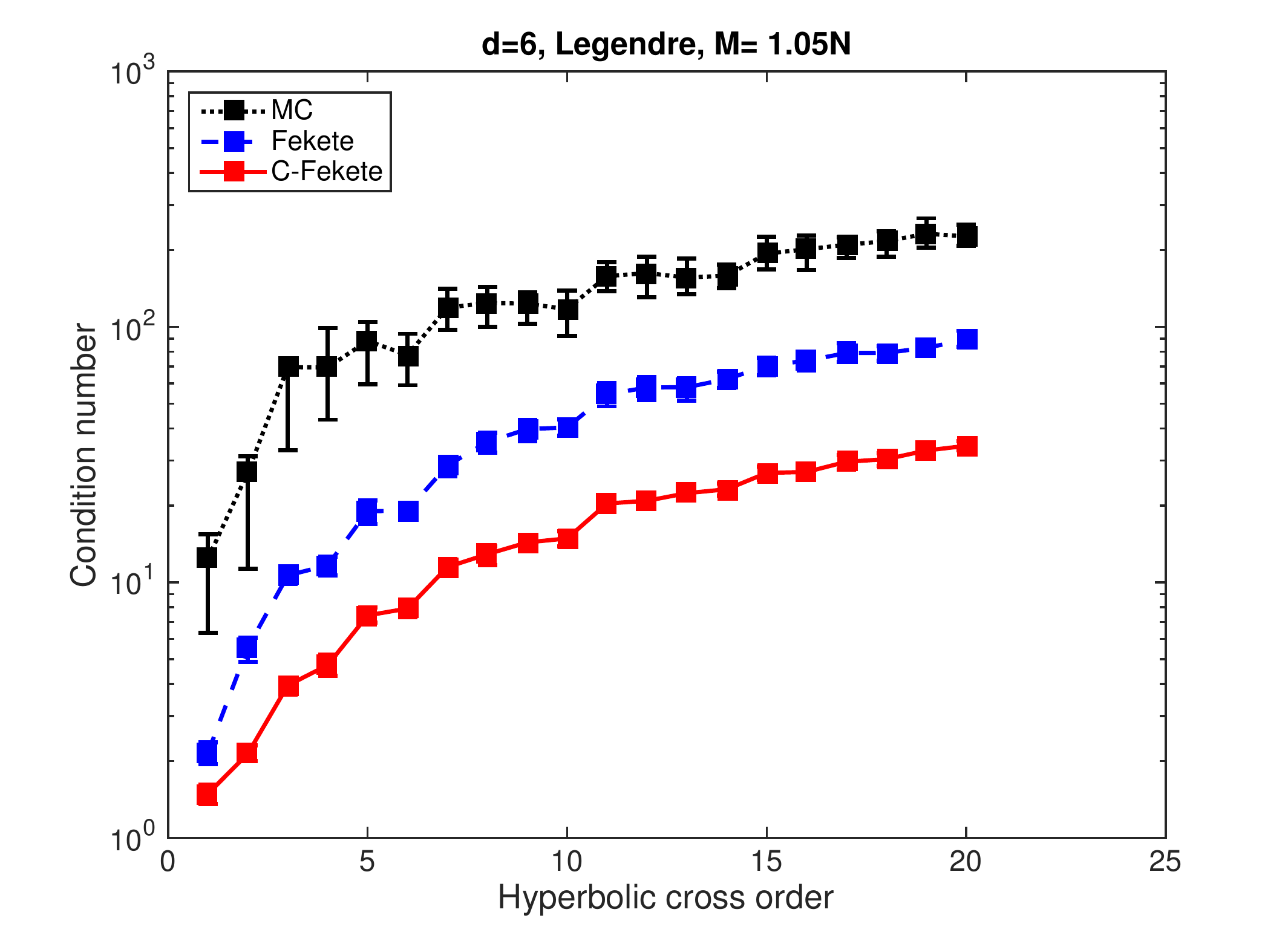}
\end{center}
  \caption{Condition number with respect to the polynomial degree in the 6-dimensional  polynomial spaces. Left: Total degree (TD); Right: Hyperbolic cross (HC).
  \label{fig:d6_Leg_cond}
    }
\end{figure}

\begin{figure}[htbp]
\begin{center}
    \includegraphics[width=6cm]{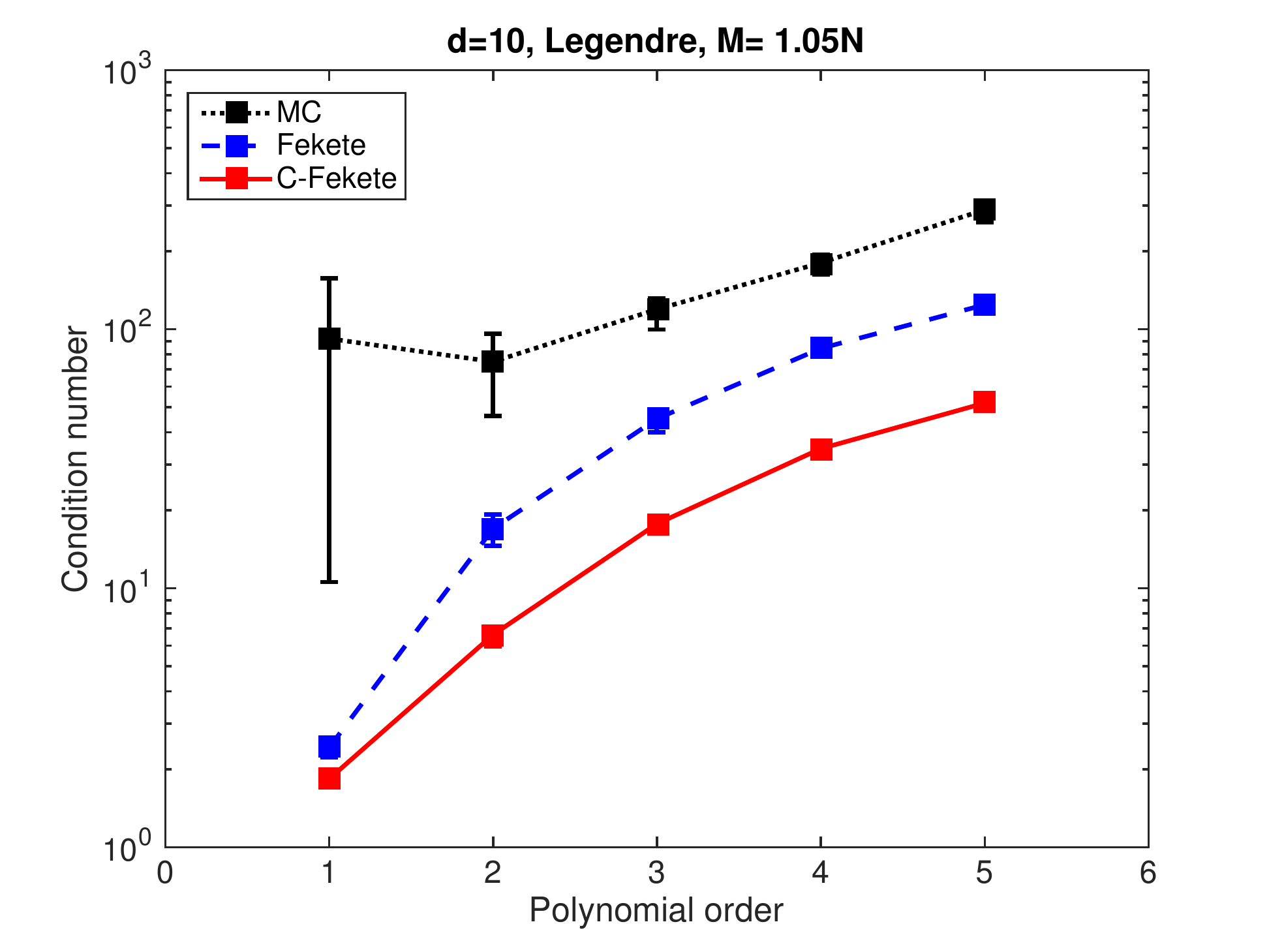}
    \includegraphics[width=6cm]{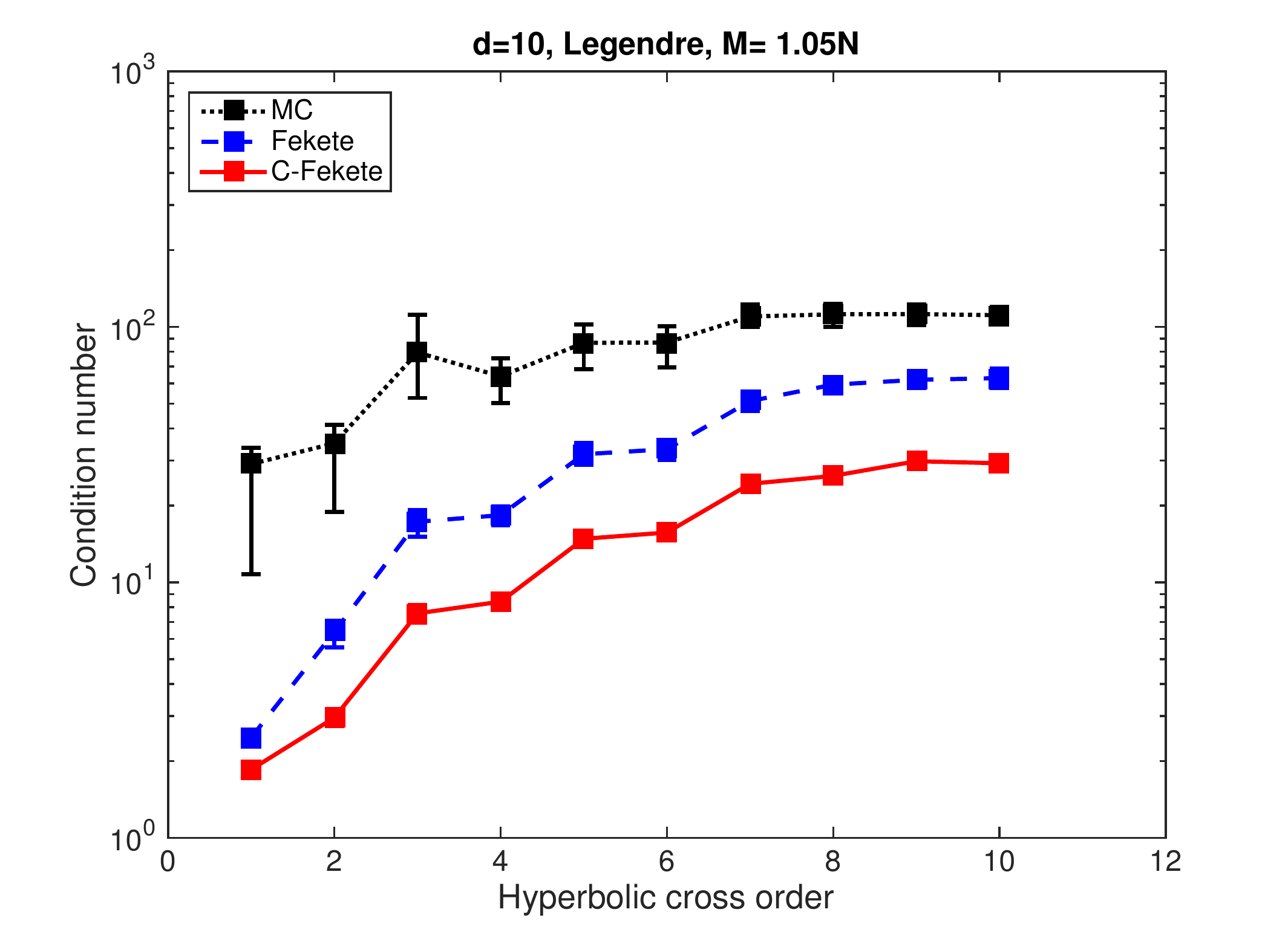}
\end{center}
  \caption{Condition number with respect to the polynomial degree in the 10-dimensional  polynomial spaces. Left: Total degree (TD); Right: Hyperbolic cross (HC).
  \label{fig:d10_Leg_cond}
    }
\end{figure}

\subsubsection{Unbounded domains}
We now let $\Gamma = \R^d$ with $\rho$ a Gaussian density, $\rho(y) \propto \exp(-\|y\|_2^2)$. The associated orthonormal polynomial family is formed from tensor-product Hermite polynomials. Our tabulation of the condition numbers are shown in Figs. \ref{fig:d2_Her_cond} - \ref{fig:d25_Her_cond}. We note that for each case, dimension $d=2, 6, 10$, and $25$, the CFP procedure produces more stable point sets than either of the alternatives, but the improvement is modest in high dimensions.
\begin{figure}[htbp]
\begin{center}
   \includegraphics[width=6cm]{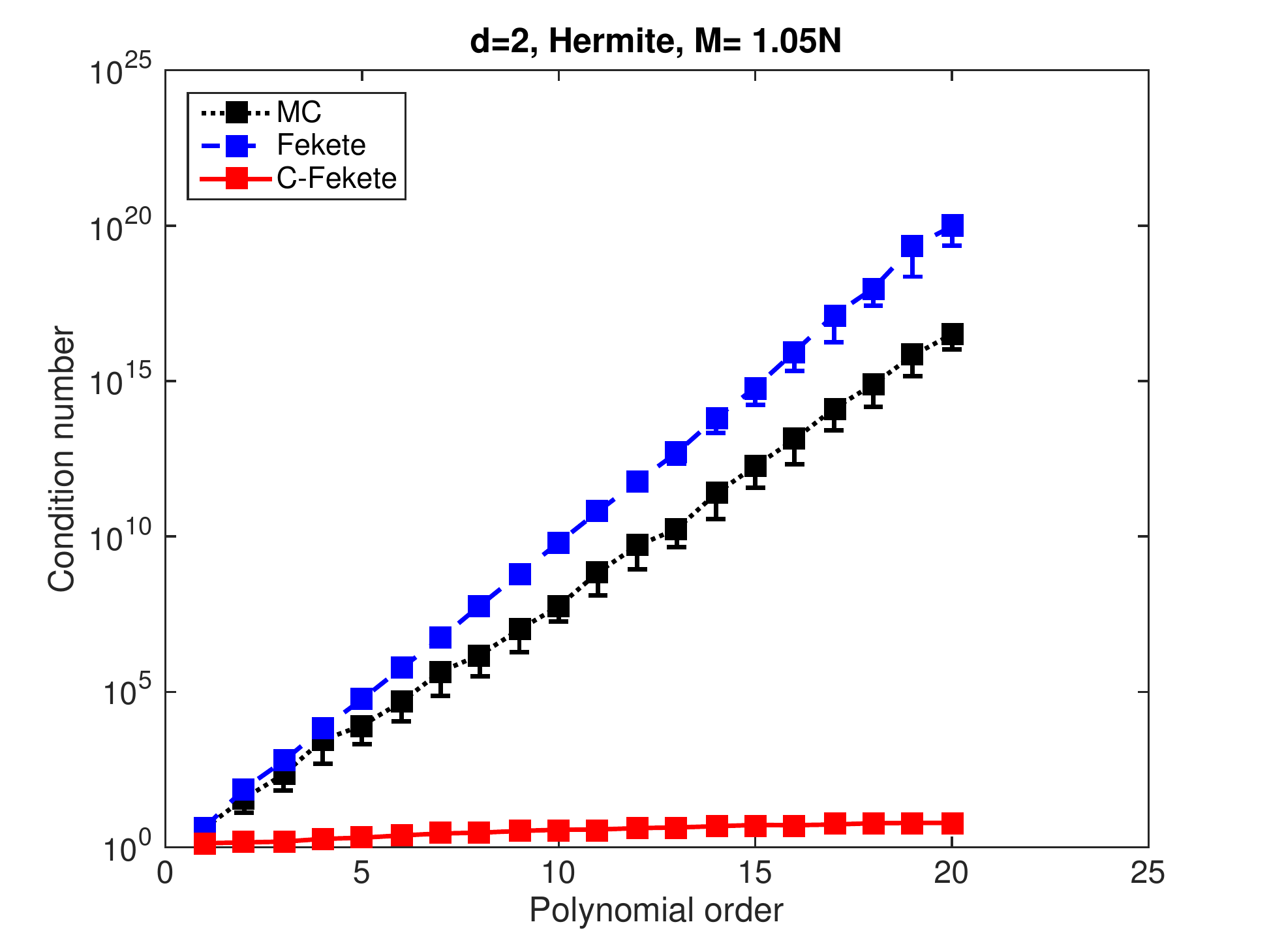}
   \includegraphics[width=6cm]{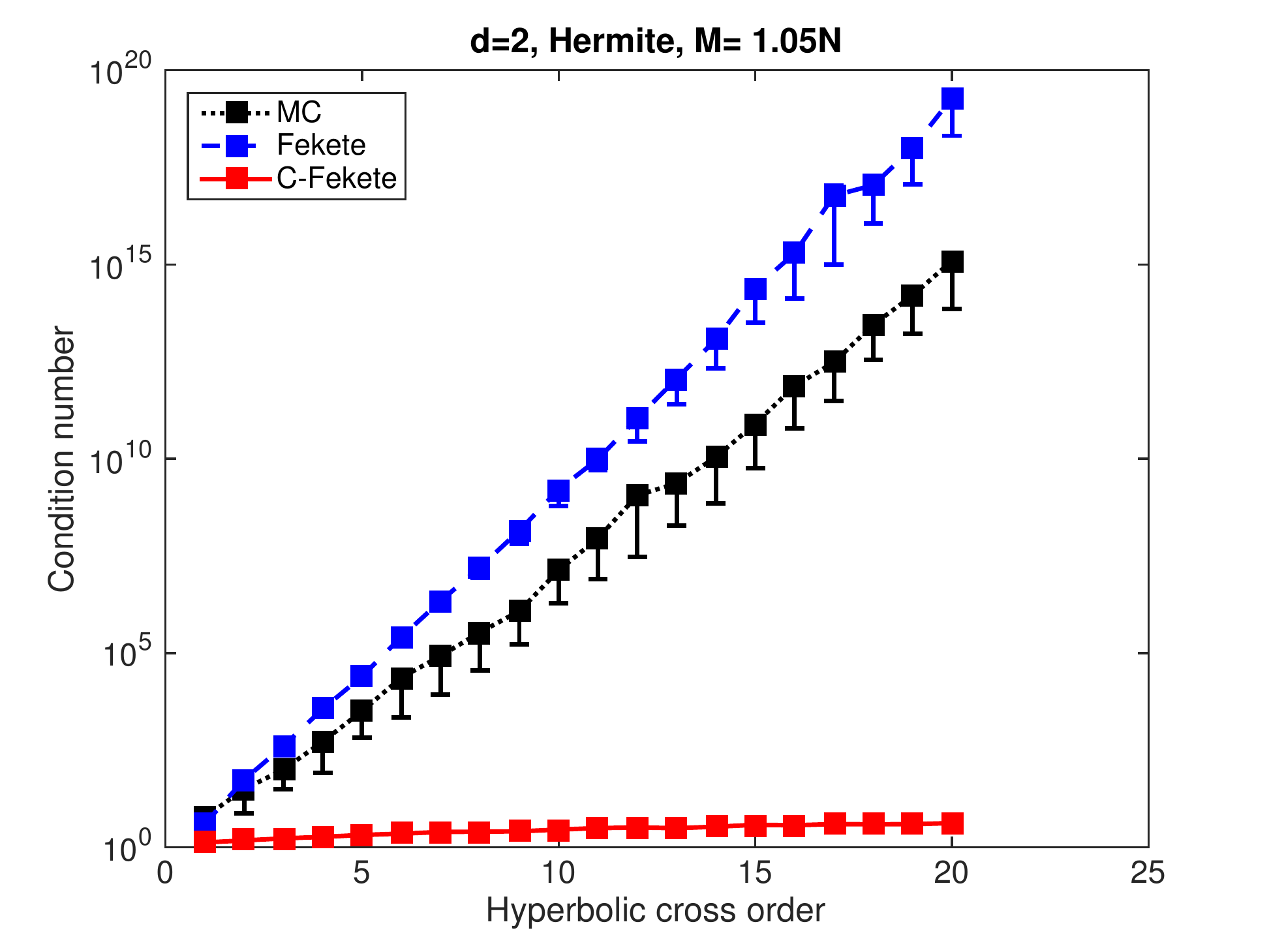}
\end{center}
  \caption{Condition number with respect to the polynomial degree in the 2-dimensional  polynomial spaces. Left: Total degree (TD); Right: Hyperbolic cross (HC).
  \label{fig:d2_Her_cond}
    }
\end{figure}

\begin{figure}[htbp]
\begin{center}
   \includegraphics[width=6cm]{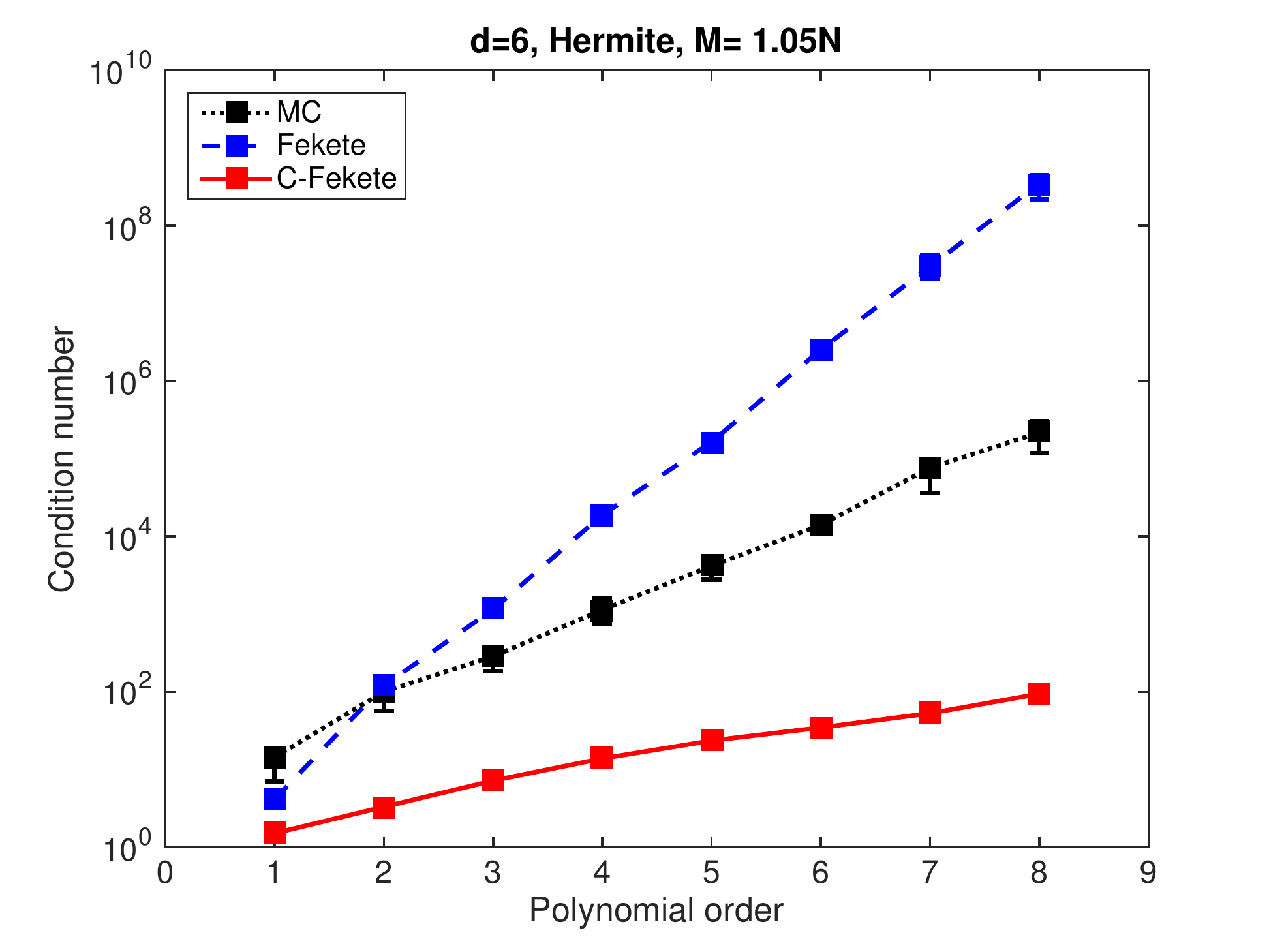}
   \includegraphics[width=6cm]{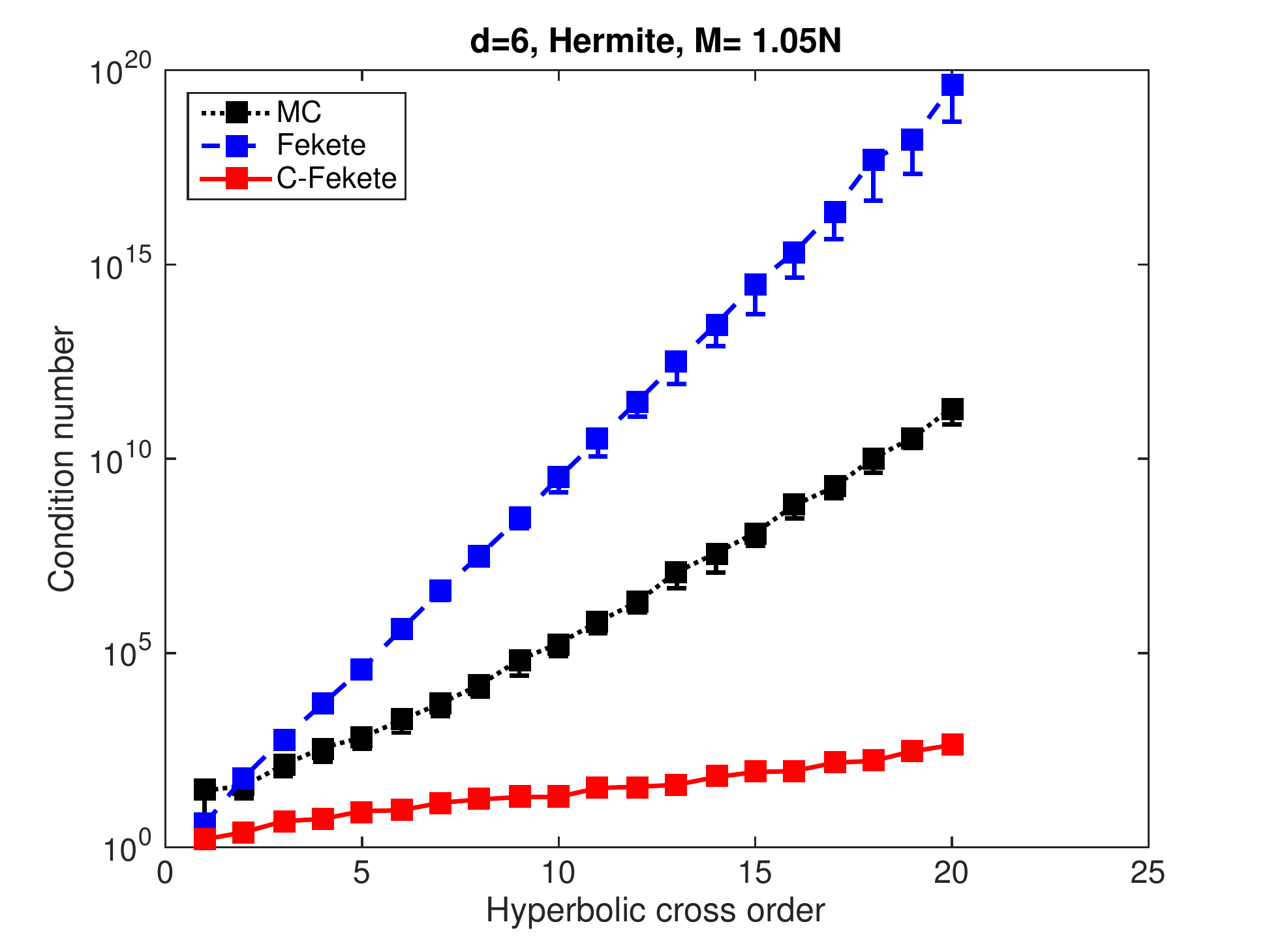}
\end{center}
  \caption{Condition number with respect to the polynomial degree in the 6-dimensional polynomial spaces. Left: Total degree (TD); Right: Hyperbolic cross (HC).
  \label{fig:d6_Her_cond}
    }
\end{figure}

\begin{figure}[htbp]
\begin{center}
   \includegraphics[width=6cm]{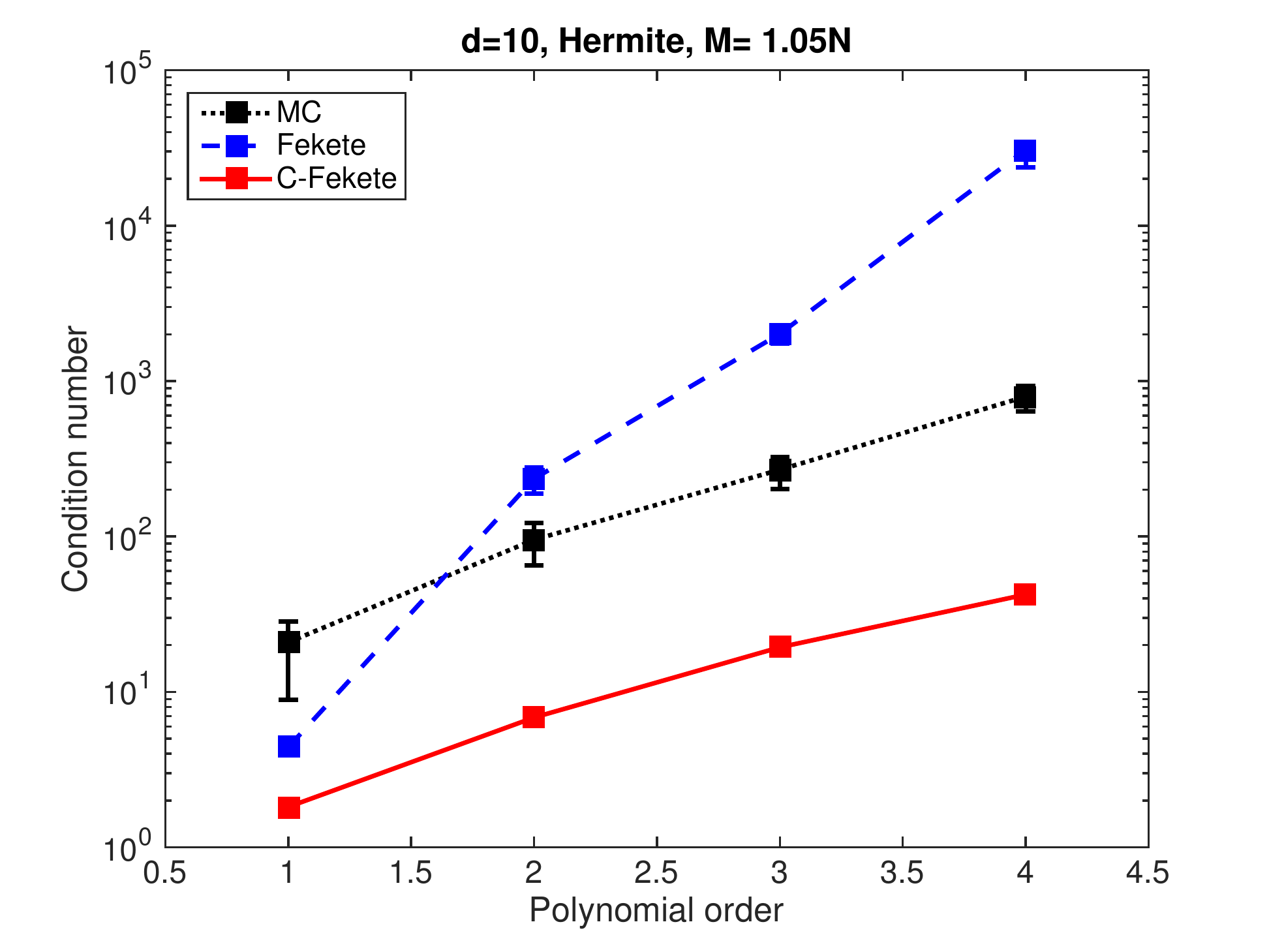}
   \includegraphics[width=6cm]{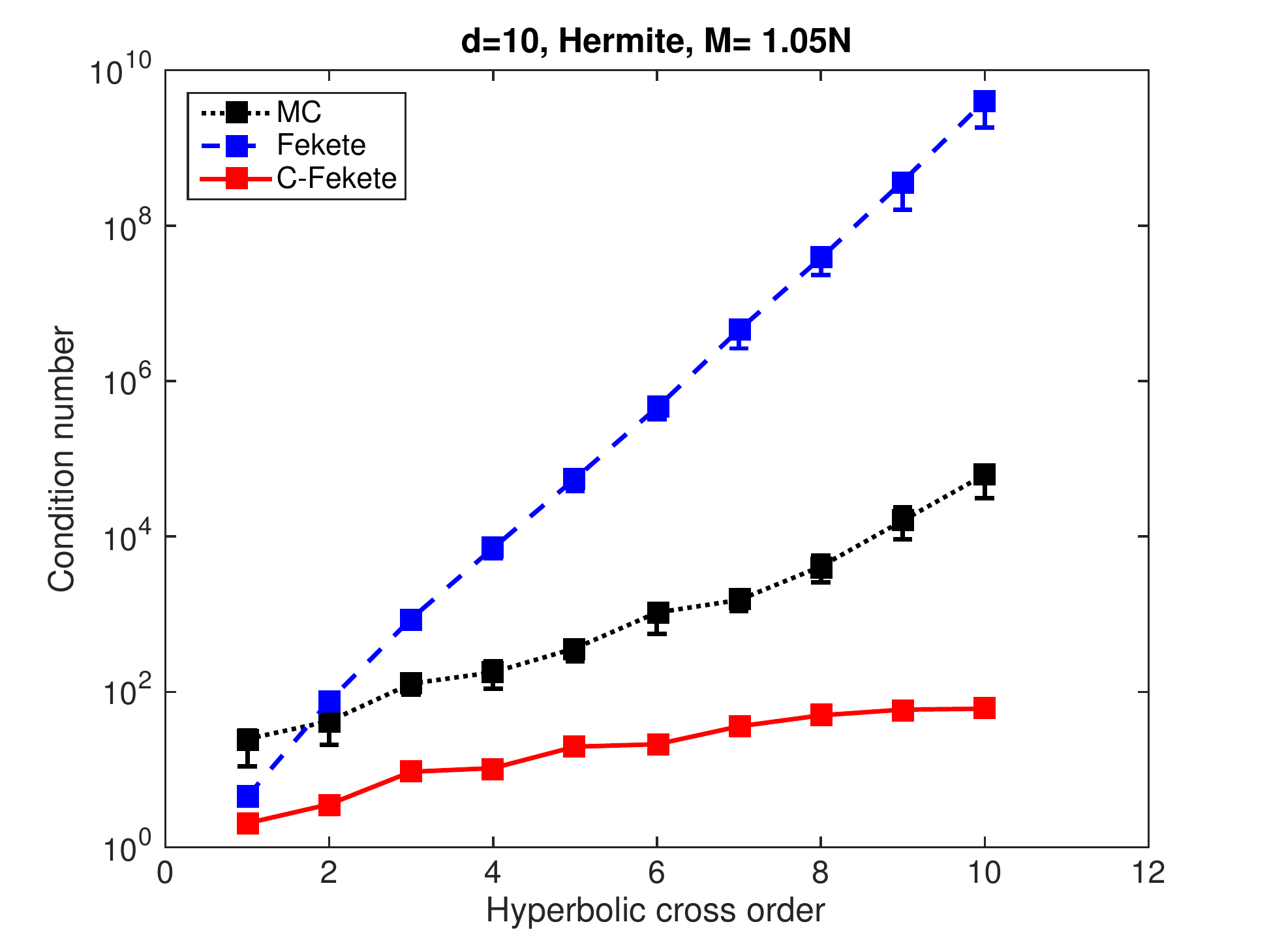}
\end{center}
  \caption{Condition number with respect to the polynomial degree in the 6-dimensional polynomial spaces. Left: Total degree (TD); Right: Hyperbolic cross (HC).
  \label{fig:d10_Her_cond}
    }
\end{figure}

\begin{figure}[htbp]
\begin{center}
   \includegraphics[width=6cm]{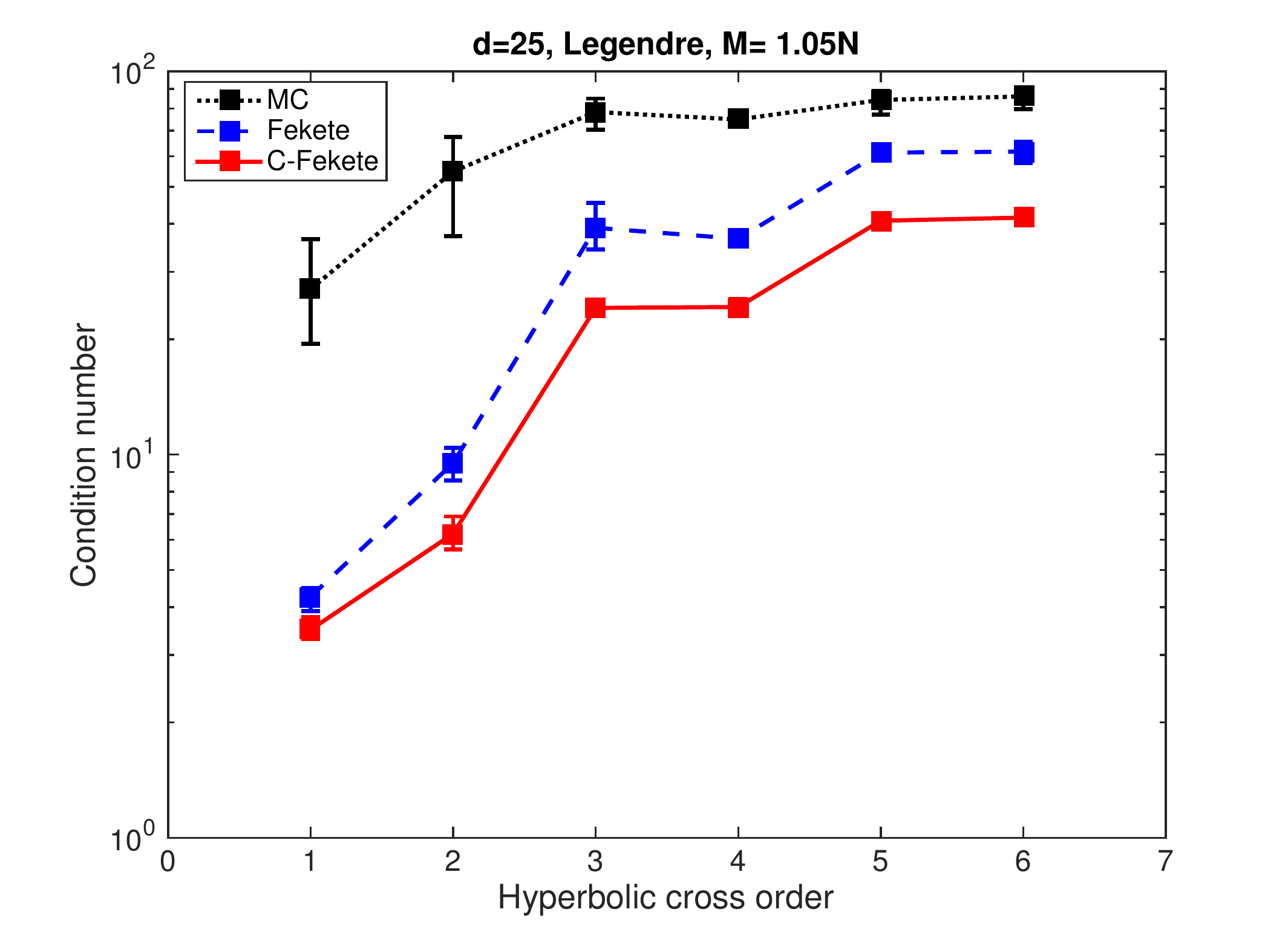}
   \includegraphics[width=6cm]{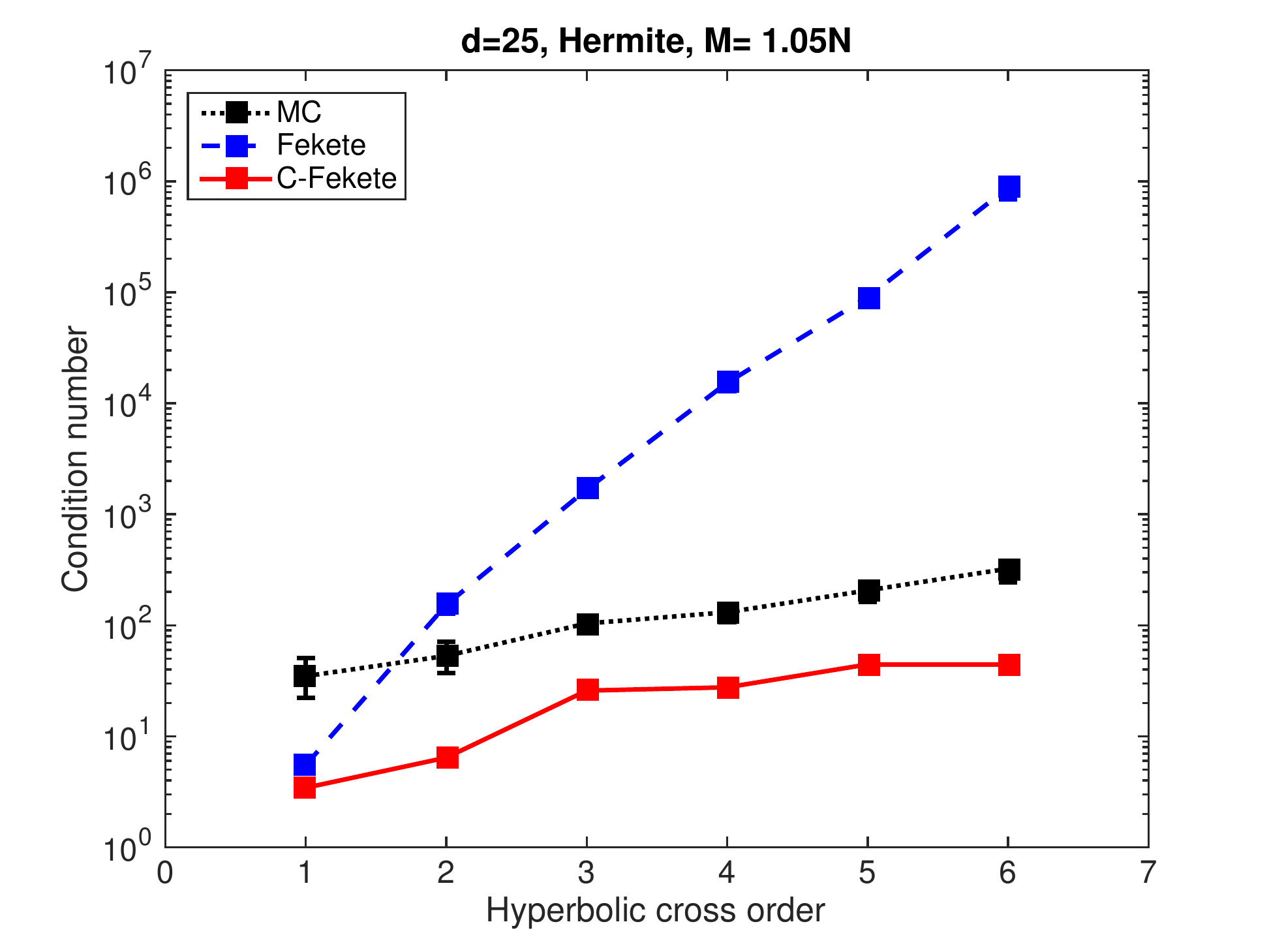}
\end{center}
  \caption{Condition number with respect to the polynomial degree in the 25-dimensional hyperbolic cross polynomial spaces. Left: Legendre; Right: Hermite.
  \label{fig:d25_Her_cond}
    }
\end{figure}

\subsection{Least-squares accuracy}
In this section, we will compare the CFP, APF, and MC algorithms in terms of their ability to approximate test functions. In all examples that follow we report the numerical results over an ensemble of 50 tests.

\subsubsection{Algebraic function} In Fig. \ref{fig:d2_fun_le}, we show the convergence rate of the least-squares approximation for Legendre approximation ($\rho$ uniform over $[-1,1]^d$) in the 2-dimensional polynomial space, for the test function $f(y) = \exp(-\sum^d_{j=1}y_j^2)$. We measure accuracy using the discrete $\ell_2$ norm which is computed using $1,000$ random samples drawn from the probability measure of orthogonality. In this case the CFP and AFP procedures work comparably.
%

In Fig. \ref{fig:d2_fun_he}, we consider the Hermite approximation for $f(y)$ in the 2-dimensional  polynomial  space. We observe here that CFP produces considerably better results compared with MC or AFP, especially for high-degree approximations.

\begin{figure}[htbp]
\begin{center}
  \includegraphics[width=6cm]{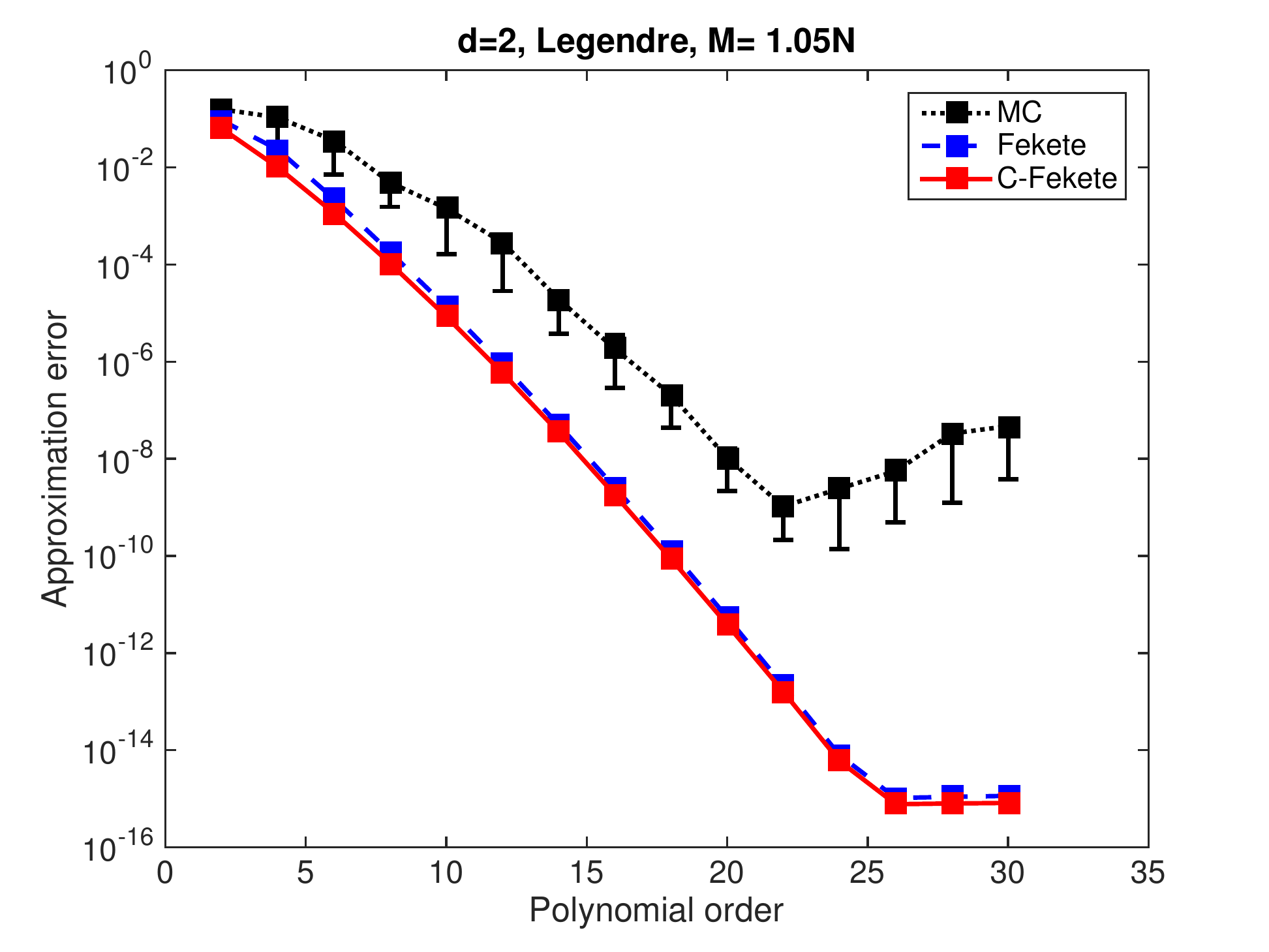}
  \includegraphics[width=6cm]{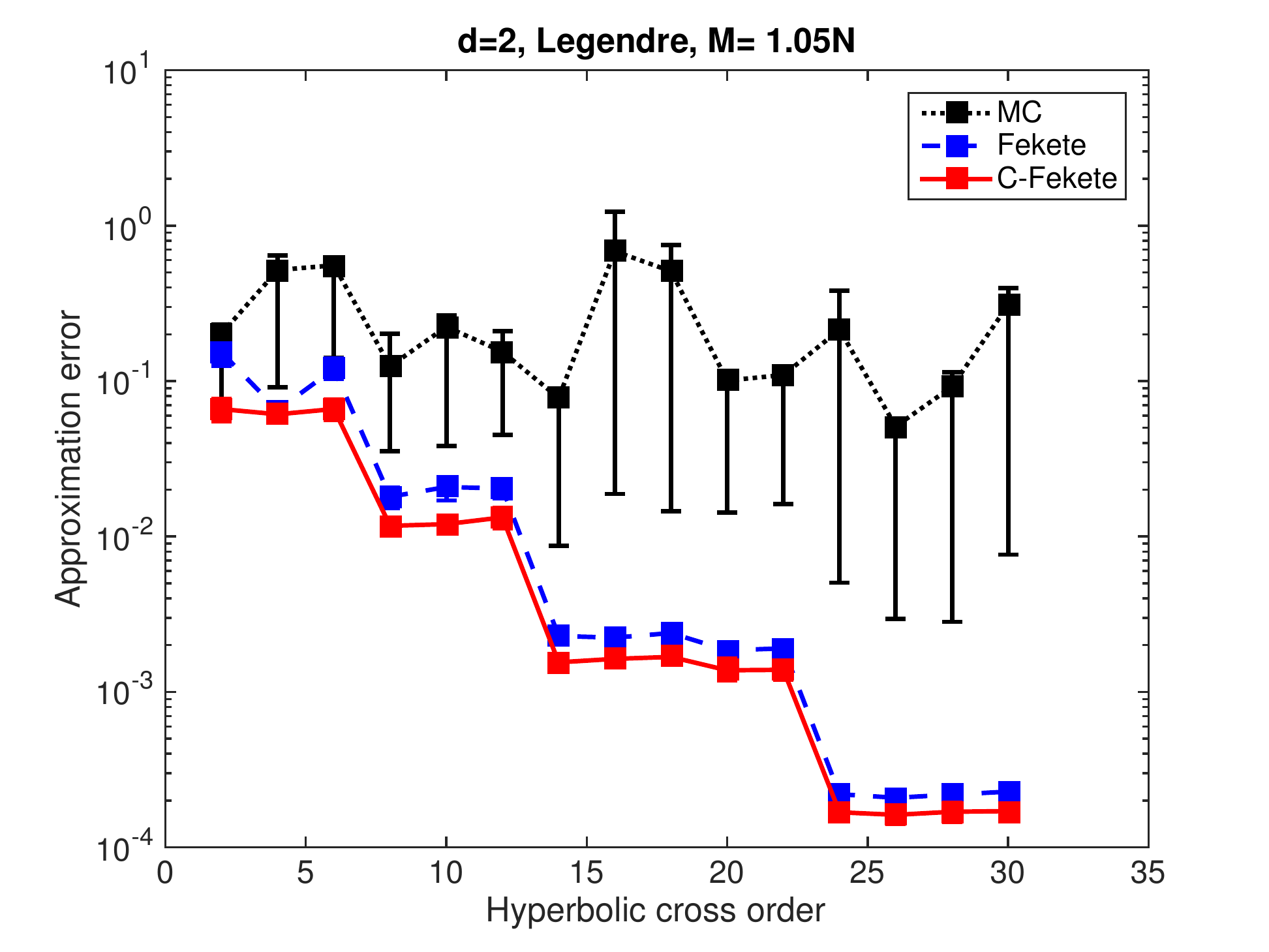}
\end{center}
  \caption{Approximation error against polynomial degree. Legendre approximation of $f(\mathbf{Y})= \exp(-\sum^d_{i=1} Y_i^2)$ Left: TD. Right: HC .
   \label{fig:d2_fun_le}
    }
\end{figure}

\begin{figure}[htbp]
\begin{center}
  \includegraphics[width=6cm]{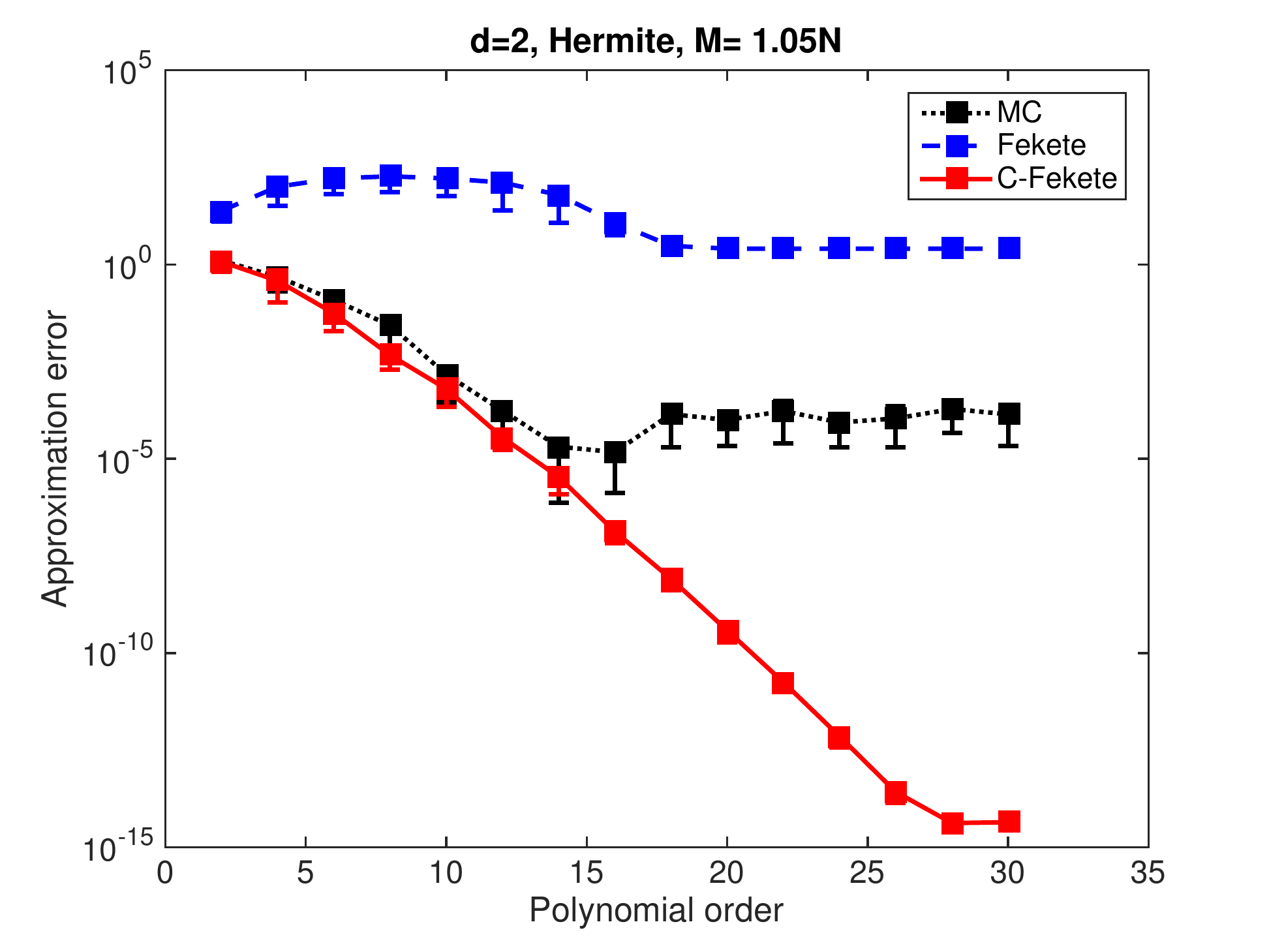}
  \includegraphics[width=6cm]{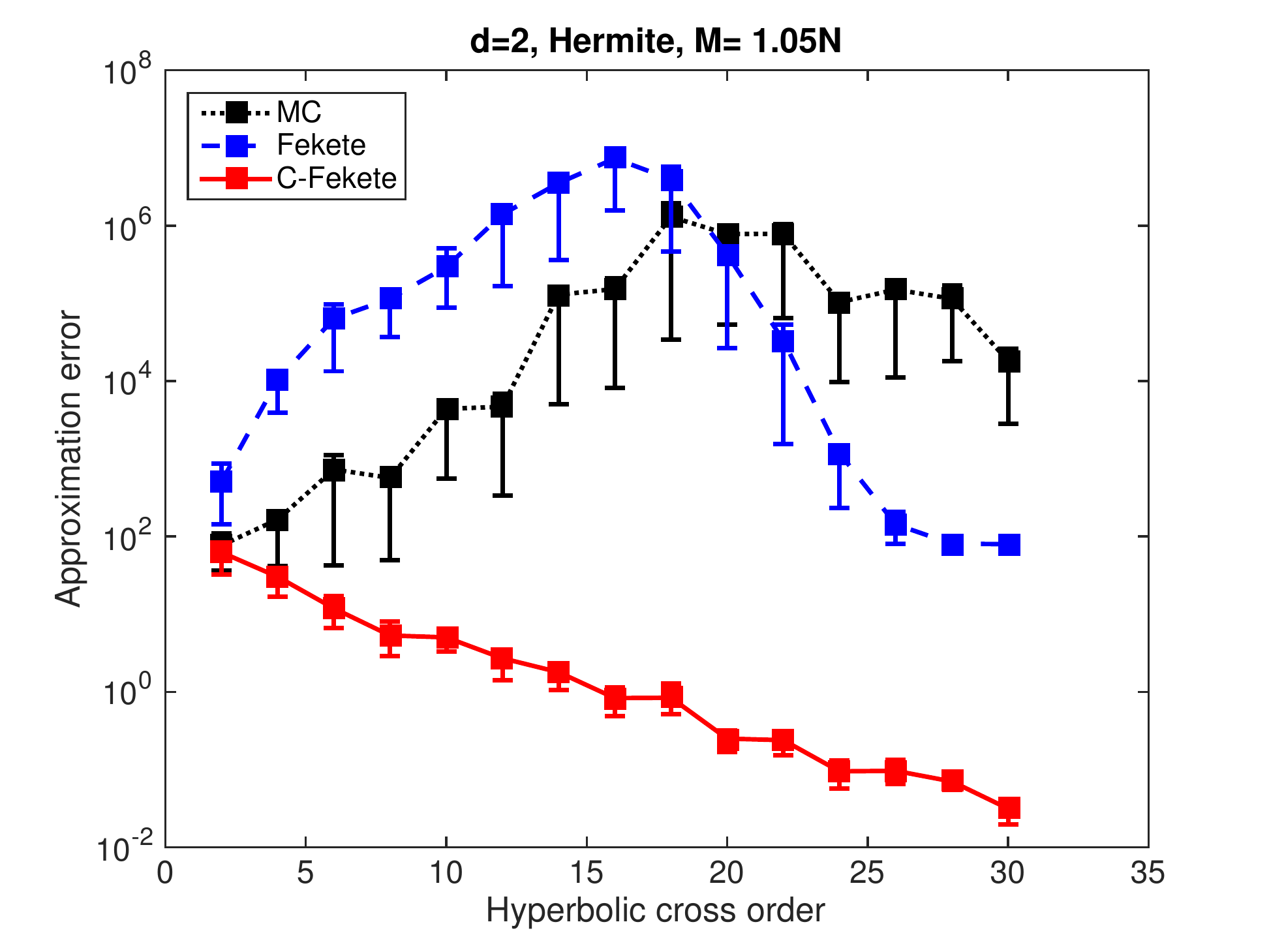}
\end{center}
  \caption{Approximation error against polynomial degree. Hermite approximation of $f(\mathbf{Y}) = \exp(-\sum^d_{i=1} Y_i)$ Left: TD. Right:  HC.
   \label{fig:d2_fun_he}
    }
\end{figure}


\subsubsection{Parameterized elliptic differential equation}

We now consider the stochastic elliptic equation, one of the most used
benchmark problems in UQ, in one spatial dimension,
 \begin{equation} \label{spde}
 -\frac{d}{dx} [\kappa(x,y)\frac{du}{dx}(x,y)]=f, \quad  (x,y)\in (0,1) \times \mathbb{R}^d,
 \end{equation}
 with boundary conditions
 \begin{eqnarray*}
 u(0,y)=0,  \quad u(1,y)=0,
 \end{eqnarray*}
and $f=2$. The random diffusivity takes the following form
 \begin{equation}\label{randdiff}
 \kappa(x,y)=1+\sigma \sum^d_{k=1} \frac{1}{k^2 \pi^2} \cos(2\pi k x) y_{k}.
 \end{equation}
 This form resembles that of
 the well known Karhunen-Loeve expansion.

 We approximate the solution $u(y)=u(0.5,y)$, and let the density $\rho(y)$ be uniform over the hypercube $[-1,1]^d$, and thus use Legendre polynomials in $y$ to approximate $u(y)$. Fig.\ref{fig:d2_Le_ell} and Fig.\ref{fig:d25_Le_ell} compare the convergence accuracy of the least squares approximations of the quantity of interest $u(0.5,y)$ using CFP, AFP, and MC algorithms. CFP performs comparably, but no worse, than AFP, and much better than MC.

\begin{figure}[htbp]
\begin{center}
  \includegraphics[width=6cm]{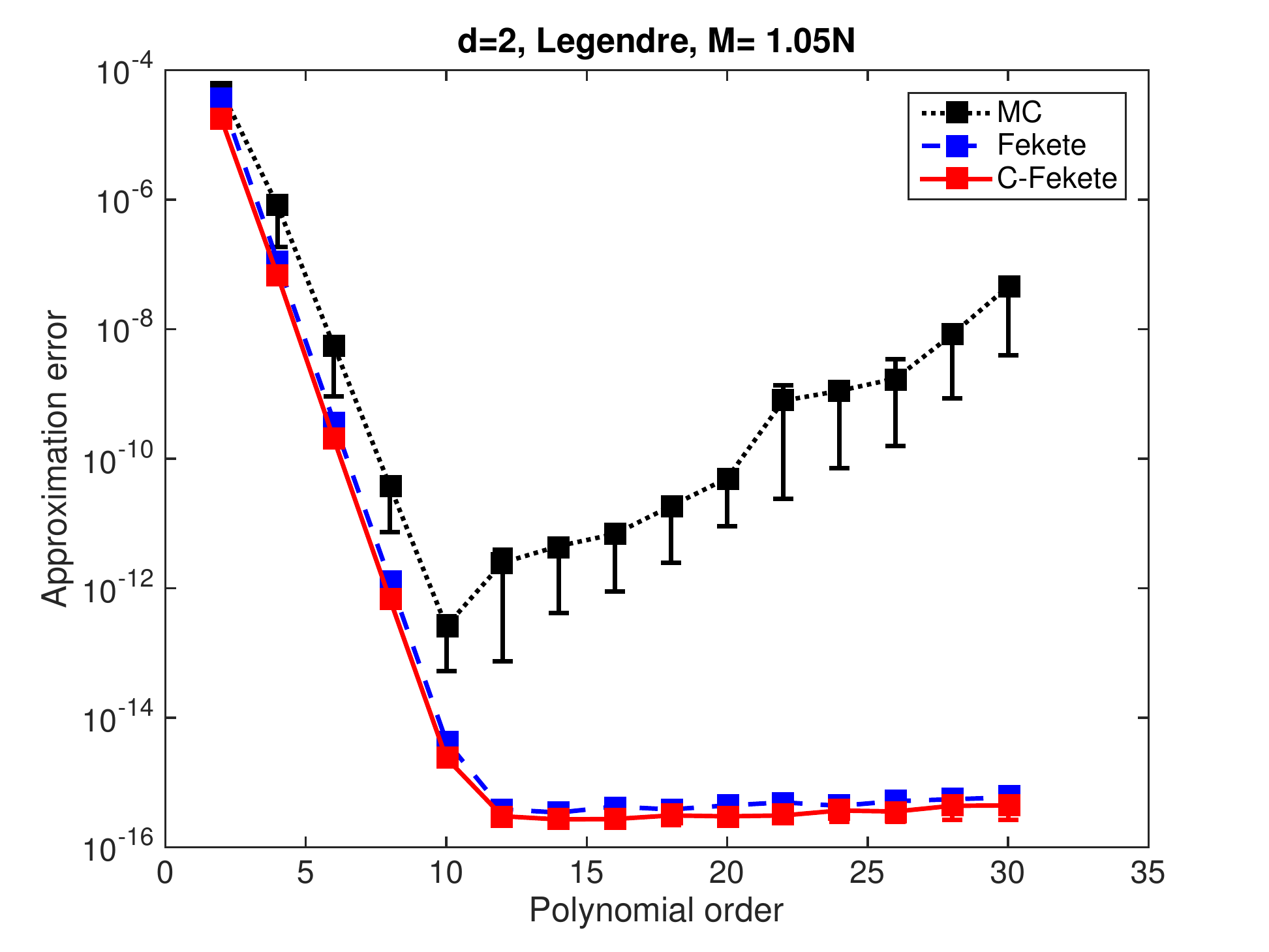}
  \includegraphics[width=6cm]{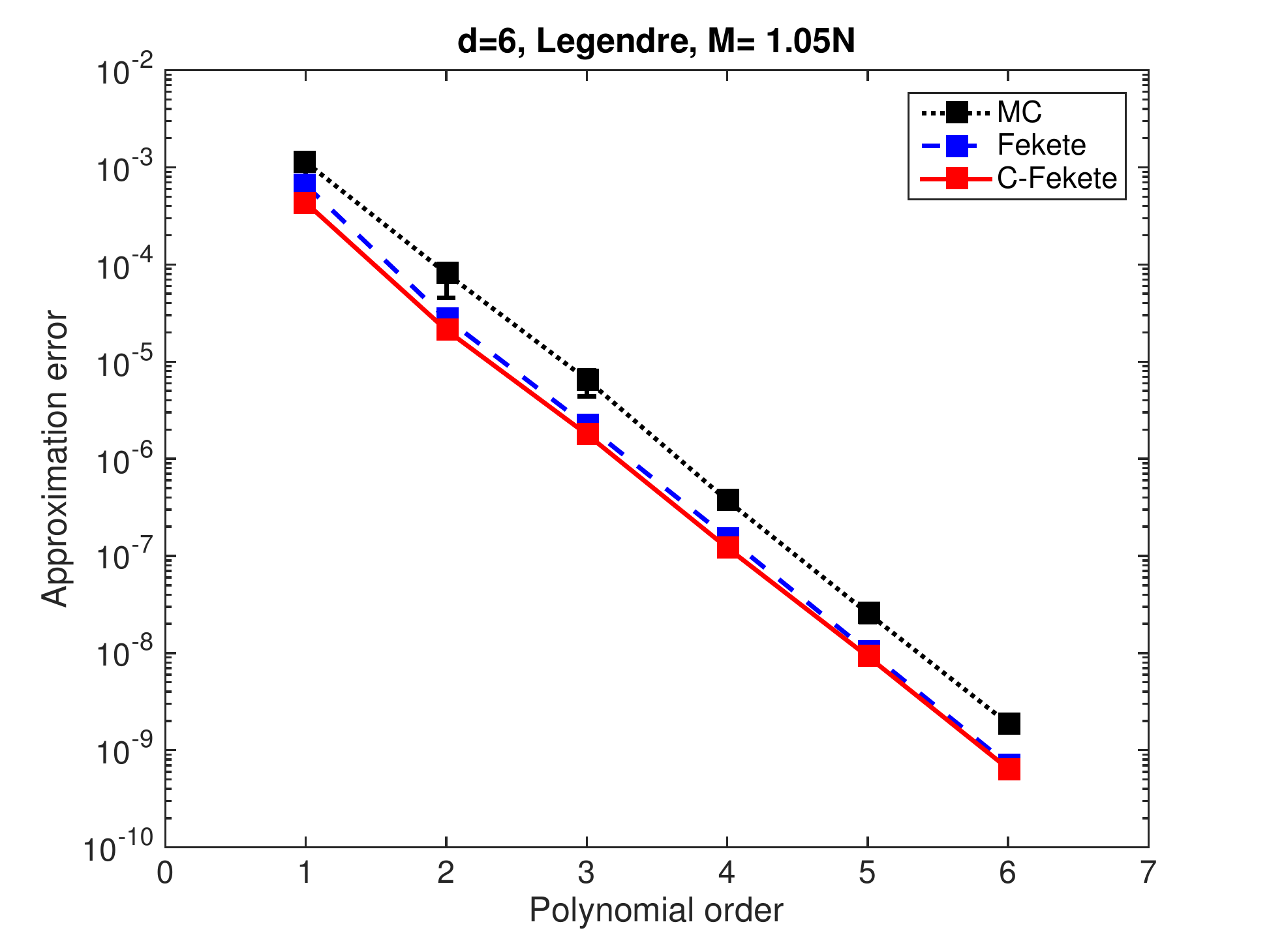}
\end{center}
  \caption{Approximation error against polynomial degree. Legendre approximation of the  diffusion equation. Left: $d=2$; Right: $d=6$.
    \label{fig:d2_Le_ell}
    }
\end{figure}

\begin{figure}[htbp]
\begin{center}
  \includegraphics[width=6cm]{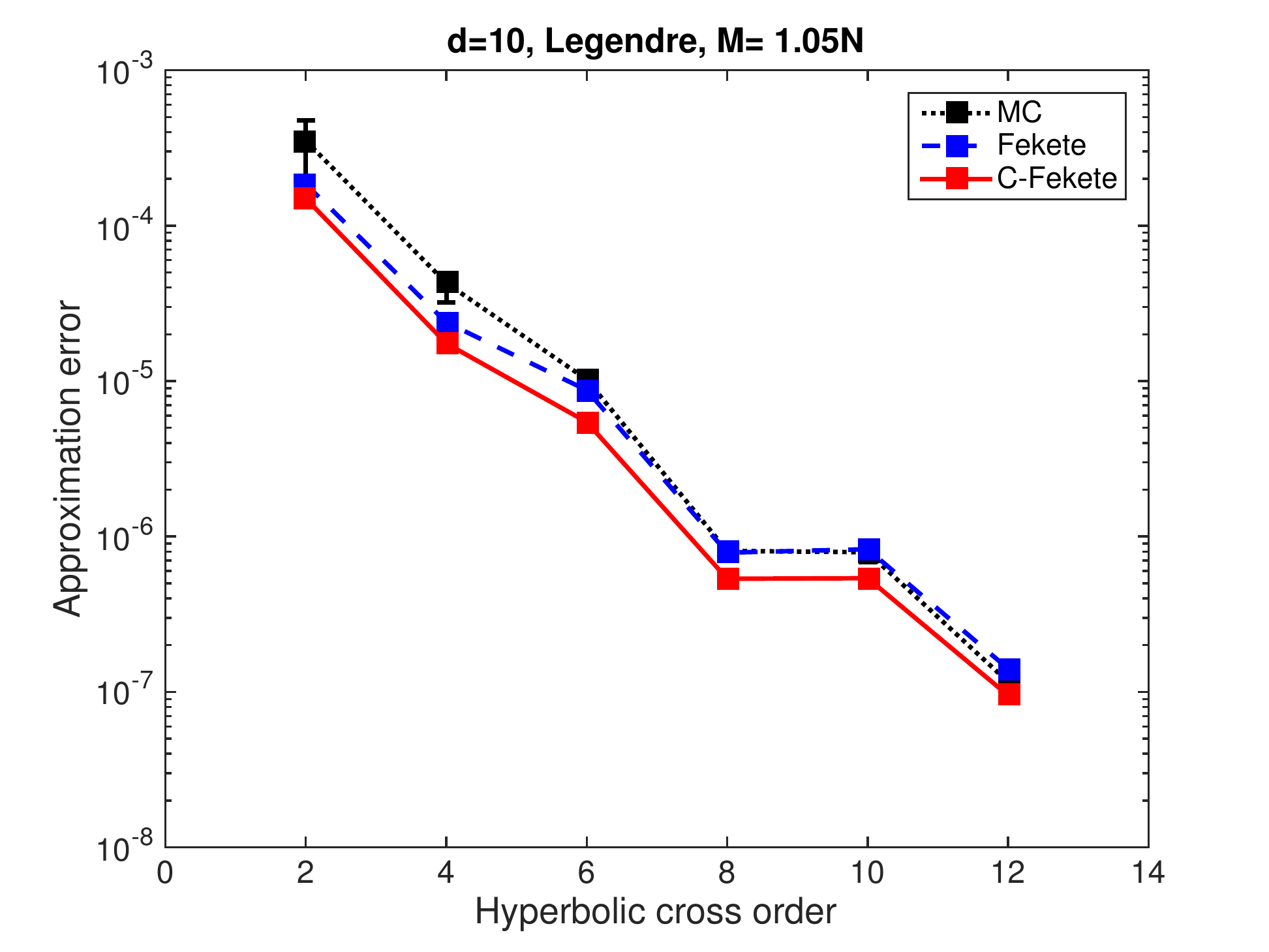}
  \includegraphics[width=6cm]{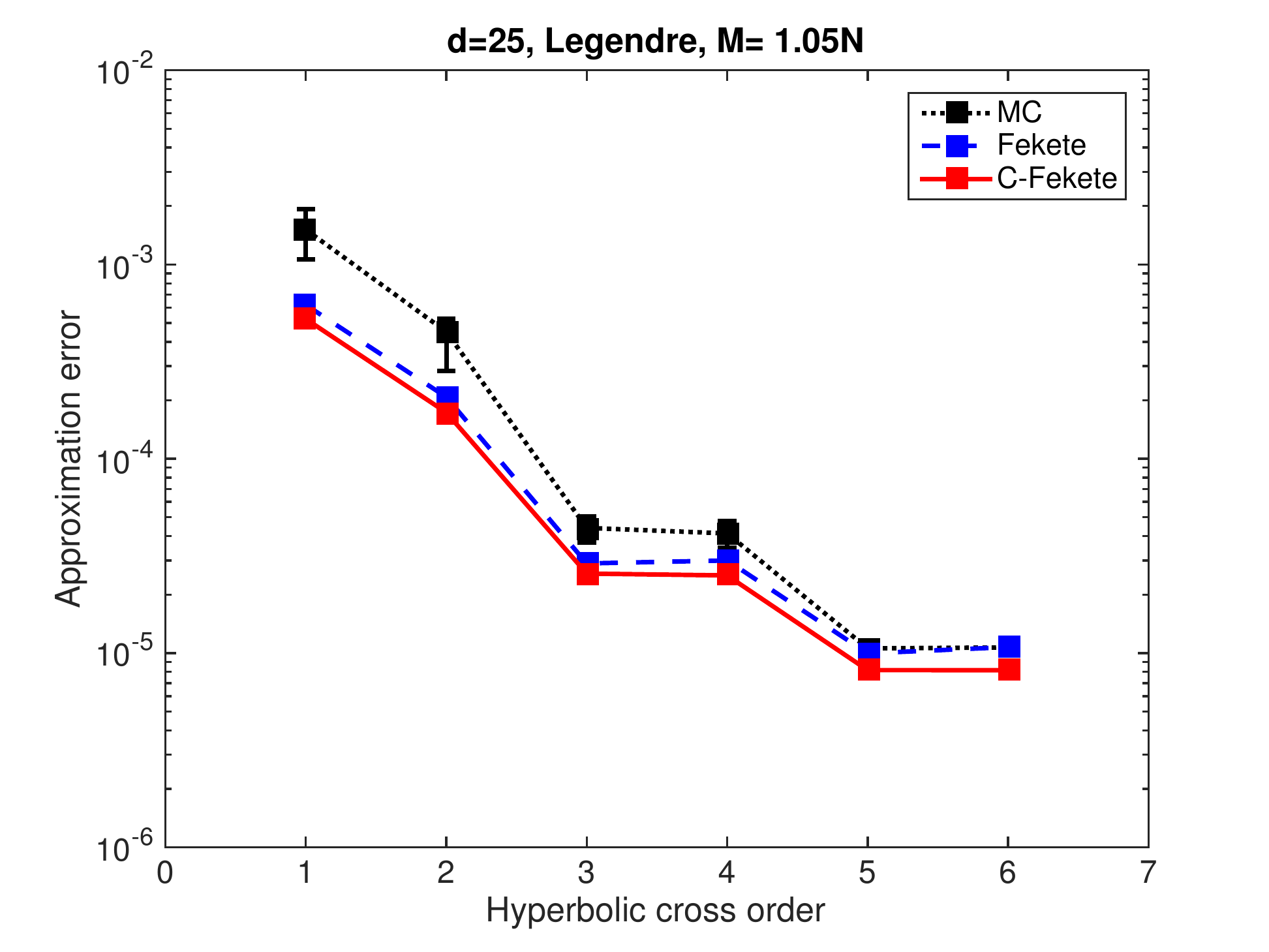}
\end{center}
  \caption{Approximation error against polynomial degree. Legendre approximation of the  diffusion equation. Left: $d=10$; Right: $d=25$.
      \label{fig:d25_Le_ell}
    }
\end{figure}

\appendix
\section{Proof of Theorem \ref{thm:nd-optimal}}\label{thm:nd-optimal-proof}
We first show the equivalence of the relations \eqref{eq:nd-optimal}. Recall the fact that for a matrix $\bs{A} \in \R^{N \times N}$ having columns $\bs{a}_i \in \R^N$, $i=1, \ldots, N$, then
\begin{align}\label{eq:det-col-inequality}
  \left| \det \bs{A} \right| \leq \prod_{i=1}^N \left\| \bs{a}_i \right\|_2,
\end{align}
with equality if and only if $\bs{a}_i$ are pairwise orthogonal. Now assume that \eqref{eq:nd-optimal-det} holds with $A_N = \left\{y_1, \ldots, y_N \right\}$. The matrix $\bs{V}(A_N, Q)$ is comprised of rows
\begin{align}\label{eq:V-rows-def}
\bs{V} &= \left( \begin{array}{c} \bs{\psi}^T(y_1) \\ \bs{\psi}^T(y_2) \\ \vdots \\ \bs{\psi}^T(y_N) \end{array}\right), & \bs{\psi}^T(y_j) &= \left( \frac{\psi_1(y_j)}{K_\Lambda(y_j)}, \; \frac{\psi_2(y_j)}{K_\Lambda(y_j)}, \ldots, \frac{\psi_N(y_j)}{K_\Lambda(y_j)}\right)^T.
\end{align}
We have
\begin{align}\label{eq:psi-row-orthonormal}
  \left\| \bs{\psi}(y_j)\right\|_2 = \sqrt{\sum_{i=1}^N \frac{\psi_i^2(y_j)}{K_\Lambda}} = 1,
\end{align}
for all $j$. Thus,
\begin{align*}
  \left| \det \bs{V}^T \right| = \left| \det \bs{V} \right| \stackrel{\eqref{eq:nd-optimal-det}}{=} 1 = \prod_{j=1}^N \left\| \bs{\psi}(y_j) \right\|_2
\end{align*}
thus showing equality in \eqref{eq:det-col-inequality} for $\bs{A} = \bs{V}^T$. Therefore, the $\bs{\psi}(y_j)$ are pairwise orthogonal, and are furthermore orthonormal because of \eqref{eq:psi-row-orthonormal}. A square matrix $\bs{V}$ with pairwise orthonormal rows is an orthogonal matrix, and hence has all of its singular values equal to 1. Therefore, $\kappa(\bs{V}) = 1$, showing \eqref{eq:nd-optimal-cond}.

Now assume that \eqref{eq:nd-optimal-cond} holds. Then $\bs{V}$, being a square matrix, is an orthogonal matrix. Hence,
\begin{align*}
  \left| \det \bs{V} \right|^2 = \left| \det \bs{V}^T \bs{V} \right| = \left| \det \bs{I} \right| = 1,
\end{align*}
showing that $\left| \det \bs{V} \right| = 1$, and hence \eqref{eq:nd-optimal-det}. This completes the proof of the equivalence of relations \eqref{eq:nd-optimal}.

Now let $A_N = \left\{ y_1, \ldots, y_N \right\}$ be a set that satisfies either (hence, both) of the conditions \eqref{eq:nd-optimal}, and choose $y_1^{F\ast} \in A_N$; we further assume without loss that $y_1^{F\ast} = y_1$. (If not, then relabel the elements in $A_N$.) We proceed by induction, showing that for each $n=1, \ldots, N-1$, the greedy optimization \eqref{eq:Q-greedy-F} has a branch yielding the new point $y_{n+1}^{F\ast} = y_{n+1}$. Note that the initialization step of induction is true by assumption. We will show the induction step is true by noting that \eqref{eq:Q-greedy-F} varies a single $\R^N$ vector in an attempt to maximize an $(n+1)$-dimensional volume spanned by unit vectors. One way to maximize this volume is by choosing the vectors to correspond to the set $A_N$, which makes the volume an $(n+1)$-dimensional orthotope. The details are as follows.

Since conditions \eqref{eq:nd-optimal} are true, the vectors $\bs{\psi}(y_j)$, $j=1, \ldots, N$ are pairwise orthonormal. For some $n \geq 1$, assume $y_j^{F\ast} = y_j$ for $1 \leq j \leq n$. The iteration \eqref{eq:Q-greedy-F} at index $n \geq 1$ takes the form,
\begin{align*}
  y_{n+1}^{F\ast} &= \argmax_{y \in \Gamma} \left| \det\bs{V}\left(A_{n\cup y}^{F\ast}, Q\right) \bs{V}^T\left(A_{n\cup y}^{F\ast}, Q\right) \right| 
\end{align*}
where
\begin{align*}
\bs{V}\left(A_{n\cup y}^{F\ast}, Q\right) = \left(\begin{array}{c} \bs{\psi}^T(y_1^{F\ast}) \\ \vdots \\ \bs{\psi}^T(y^{F\ast}_n)  \\ \bs{\psi}^T(y) \end{array}\right).
\end{align*}
Since $y_j^{F\ast} = y_j$ for $1 \leq j \leq n$, then $\bs{\psi}^T(y^{F\ast}_j)$ for $1 \leq j \leq n$ are pairwise orthonormal. This allows us to easily compute the $Q R$ decomposition of $\bs{V}^T$:
\begin{align*}
\bs{V}^T = \bs{Q} \bs{R} = \left( \begin{array}{ccccc} \bs{\psi}(y^{F\ast}_1) & \bs{\psi}(y^{F\ast}_2) & \cdots & \bs{\psi}(y^{F\ast}_n) & \widetilde{\bs{\psi}}(y) \end{array} \right) \left( \begin{array}{cc} \bs{I}_n & \bs{b} \\ \bs{0} & r \end{array}\right),
\end{align*}
where $\bs{I}_n$ is the $n\times n$ identity matrix, the vector $\bs{b} \in \R^{n}$ has entries
\begin{align*}
  (b)_j = \bs{\psi}^T\left(y_j^{F\ast}\right) \bs{\psi}\left(y\right),
\end{align*}
and $r = \sqrt{1 - \|\bs{b} \|_2^2}$. Note by Bessel's inequality that $\|\bs{b}\|_2 \leq 1$, and hence $r \in [0, 1]$. When $r > 0$, the last column of $\bs{Q}$ is
\begin{align*}
  \widetilde{\bs{\psi}}(y) = \frac{1}{r} \left[ \bs{\psi}(y) - \sum_{j=1}^n b_j \bs{\psi}(y_j^{F\ast}) \right],
\end{align*}
and when $r = 0$ we let $\widetilde{\bs{\psi}}(y)$ be any unit vector. Since the matrix $\bs{Q}$ is orthogonal, we have
\begin{align*}
\left| \det\bs{V}\left(A_{n\cup y}^{F\ast}, Q\right) \bs{V}^T\left(A_{n\cup y}^{F\ast}, Q\right) \right| = \left| \det(\bs{R}^T \bs{Q}^T \bs{Q} \bs{R}^T) \right| = \left| \det(\bs{R}^T \bs{R}) \right| = r^2.
\end{align*}
This is maximized when $r = 1$, which happens when $\bs{\psi}(y)$ is orthonormal to $\bs{\psi}(y_j^{F\ast})$ for $1 \leq j \leq n$. This is achievable by setting $y \gets y_{n+1}$ since $\bs{\psi}(y_{n+1})$ is a vector satisfying the desired orthonormality condition. Thus one maximizer of \eqref{eq:Q-greedy-F} is $y_{n+1}^{F\ast} = y_{n+1}$. This completes the inductive step, showing that $y^{F\ast}_{j} = y_j$ for $1 \leq j \leq N$.

Nearly the same argument shows that the greedy iteration \eqref{eq:Q-greedy-C} has a solution branch equal to $A_N$; we omit the proof.

\section{Proof of Lemma \ref{lemma:1d}}\label{lemma:1d-proof}
The results of this Lemma are essentially well-known. (E.g., historically \cite{szego_orthogonal_1975,freud_orthogonal_1971} or \cite{narayan2017} for a modern compilation of these results.) However, these results are scattered so we provide a proof here in order to be self-contained.

There are 5 enumerated statements in Lemma \ref{lemma:1d}. In this univariate $d=1$ case, our orthonormal polynomials $\varphi_n(z)$ satisfy a three-term recurrence, and the Christoffel-Darboux relation,
\begin{align}\nonumber
  z \varphi_n(z) &= \sqrt{b_n} \varphi_{n-1}(z) + a_n \varphi_n(z) + \sqrt{b_{n+1}} \varphi_{n+1}(z), \\\label{eq:cd}
  \sum_{j=0}^{n-1} \varphi_j(x) \varphi_j(z) &= \sqrt{b_n} \frac{ \varphi_n(x) \varphi_{n-1}(z) - \varphi_n(z) \varphi_{n-1}(x)}{x - z} \\
                                             &= \frac{\sqrt{b_n}}{\varphi_{n-1}(x) \varphi_{n-1}(z)} \left( \frac{r_n(x) - r_n(z)}{x - z}\right),
\end{align}
where the constants $a_n$ and $b_n$ depend on the polynomial moments of $\rho$. We define the set
\begin{align*}
  A_N(y) = r_N^{-1}\left( r_N(y) \right),
\end{align*}
where $r_N(y) \in \R$ since by assumption $y \not\in \varphi_{N-1}^{-1}(0)$. The function $r_N$ is meromoprhic with $N-1$ distinct poles on $\R$, and a straightforward computation shows that $r_N'(z)$ is continuous and positive everywhere except at its poles. This, coupled with the fact that $\lim_{z \rightarrow \pm \infty} r_N(z) = \pm \infty$ shows that the set $A_N(y)$ defined above consists of $N$ distinct points on $\R$. We label these points as $y_j$:
\begin{align*}
  A_N(y) = \left\{ y_1, \ldots, y_N \right\}.
\end{align*}
By definition, $r_N(y_j)$ is the same number for any $1 \leq j \leq N$. We now consider the matrix $\bs{V}\left(A_N(y); Q\right)$, whose rows are $\bs{\psi}^T(y_j)$, which is defined in \eqref{eq:V-rows-def}. Note that $\bs{\psi}(y_j)$ is a unit vector for each $j$. We have, for $j \neq k$,
\begin{align*}
  \bs{\psi}^T(y_j) \bs{\psi}(y_k) &= \frac{1}{\sqrt{K_\Lambda(y_j) K_\Lambda(y_k)}} \sum_{j=0}^{N-1} \varphi_j(y_j) \varphi_j(y_k) \\
                                  &\stackrel{\eqref{eq:cd}}{=} \frac{\sqrt{b_N}}{\varphi_{N-1}(y_j) \varphi_{N-1}(y_k) \sqrt{K_\Lambda(y_j) K_\Lambda(y_k)}} \left( \frac{r_N(y_j) - r_N(y_k)}{y_j - y_k}\right) = 0,
\end{align*}
where the last equality holds since $r_N(y_j) = r_N(y_k)$ with $y_j \neq y_k$. Thus, $\bs{V}$ is a square matrix with orthonormal rows; therefore it is an orthogonal matrix, and has modulus determinant 1 and condition number 1. Therefore, $A_N(y)$ satisfies \eqref{eq:nd-optimal}, proving the first statement in the Lemma. The second statement, uniqueness of $A_N(y)$ is straightforward given the construction above. The third statement, defining $A_N(y)$ as level sets of $r_N$, is our explicit construction above.

To show the fourth statement, consider the matrix $\bs{V} = \bs{V}(A_N(y), Q)$, which we have already shown is an orthogonal matrix, and hence
\begin{align*}
  \bs{V}^T \bs{V} = \bs{I}_N
\end{align*}
For row and column indices $i$ and $j$, respectively, the componentwise equality above reads
\begin{align}\label{eq:quadrule}
  \sum_{q=1}^N \frac{1}{K_\Lambda(y_q)} \varphi_i(y_q) \varphi_j(y_q) = \delta_{i,j} = \int_\Gamma \varphi_{i-1}(z) \varphi_{j-1}(z) \rho(z) d z,
\end{align}
for $0 \leq i,j \leq N-1$, where the final equality is just orthonormality of the polynomials $\varphi_i$. This shows that the quadrature rule whose abscissae are collocated at $A_N(y)$ exactly integrates products $\varphi_{i} \varphi_{j}$ for $0 \leq i, j \leq N-1$. Let $p$ be an arbitrary polynomial of degree $2N-2$ or less. Euclidean division of this polyomial by $\varphi_{N-1}$ yields
\begin{align*}
  p = \varphi_{N-1} q + r,
\end{align*}
where $q$ is a polynomial of degree $N-1$ or less, and $r$ is a polynomial of degree $N-2$ or less. We now use the fact that $\left\{ \varphi_j\right\}_{j=0}^{N-1}$ is a basis for polynomials of degree $N-1$ and less, and so there exist constants $c_j$ and $d_j$ such that
\begin{align*}
  q(z) &= \sum_{j=0}^{N-1} c_j \varphi_j(z), &
  r(z) &= \sum_{j=0}^{N-1} d_j \varphi_j(z),
\end{align*}
with $d_{N-1} = 0$. Since $\rho$ is a probability density, then $\varphi_0(z) = 1$, implying
\begin{align*}
  p(z) = \sum_{j=0}^{N-1} c_j \varphi_j(z) \varphi_{N-1}(z) + \sum_{j=0}^{N-1} d_j \varphi_j(z) \varphi_0(z).
\end{align*}
Since the quadrature rule in \eqref{eq:quadrule} is linear and can exactly integrate products $\varphi_i \varphi_j$, then it can exactly integrate $p$, which equals a sum of such products. This shows statement four of the Lemma.

The final statement is straightforward: if $y \in \varphi_N^{-1}(0)$, then $r_N(y) = 0$, and so $A_N = r_N^{-1}(0)$. The zero level set $r_N^{-1}(0)$ coincides with the zero level set $\varphi_{N}^{-1}(0)$ since the roots of $\varphi_{N-1}$ and the roots of $\varphi_{N}$ are disjoint sets. This shows that $A_N(y)$ is the zero set of $\varphi_N$; therefore, these points are the Gaussian quadrature nodes, proving statement five of the Lemma.

\section{Proof of Thereom \ref{thm:1d-optimal}}\label{thm:1d-optimal-proof}
This proof is essentially a combination of Lemma \ref{lemma:1d} and Thereom \ref{thm:nd-optimal}. By Lemma \ref{lemma:1d}, the set $A_N$ has determinant modulus and condition number 1, satisfying \eqref{eq:nd-optimal}. Thus, Theorem \ref{thm:nd-optimal} guarantees that both greedy algorithms \eqref{eq:Q-greedy} produce the same optimal set $A_N$. Since the set $A_N$ containing $y$ is unique, then this is true regardless of which solution branches are taken during the greedy iterations \eqref{eq:Q-greedy}.

\bibliographystyle{plain}
\bibliography{OSAUQ}

\end{document}